\numberwithin{equation}{section}
\numberwithin{subsection}{section}
\newenvironment{enumeratei}
{\begin{enumerate}[\upshape (i)]}
{\end{enumerate}}
\newtheorem*{namedtheorem}{\theoremname}
\newcommand{\theoremname}{testing}
\newtheorem{theorem}{Theorem}[section]
\newtheorem{proposition}[theorem]{Proposition}
\newtheorem{proposition-definition}[theorem]
{Proposition-Definition}
\newtheorem{corollary}[theorem]{Corollary}
\newtheorem{lemma}[theorem]{Lemma}
\theoremstyle{definition}
\newtheorem{remark}[theorem]{Remark}
\theoremstyle{remark}
 \newcommand\cH{\mathcal{H}}
\newcommand\cI{\mathcal{I}} 
 \newcommand\cL{\mathcal{L}}
\newcommand\cM{\mathcal{M}} 
\newcommand\cO{\mathcal{O}} 
\newcommand\cS{\mathcal{S}} 
\newcommand\cU{\mathcal{U}}
 \newcommand\PP{\mathbb{P}}
 \newcommand\ZZ{\mathbb{Z}}
\newcommand\OO{\mathcal{O}_C}
\newcommand\arr{\ifinner\to\else\longrightarrow\fi}
\newcommand\arrto{\ifinner\mapsto\else\longmapsto\fi}
\newcommand\eqdef{\overset{\mathrm{\scriptscriptstyle def}} =}
\def\displaytimes_#1{\mathrel{\mathop{\times}\limits_{#1}}}
\def\displayotimes_#1{\mathrel{\mathop{\bigotimes}\limits_{#1}}}
\newcommand\ext{\operatorname{Ext}}
\newcommand\pic{\operatorname{Pic}}
\newcommand\Jac{\operatorname{Jac}}
\newcommand\Kum{\operatorname{Kum}}
\newcommand\doublelong[2]{\mathbin{\xymatrix{{}\ar@<3pt>[r]^{#1}
\ar@<-3pt>[r]_{#2}&}}}
\newlength{\ignora}
\newcommand{\Sec}{\mathrm{Sec}}
\newcommand{\ki}{\mathcal{I}}
\newcommand{\sym}{\operatorname{Sym}}
\newcommand{\lra}{\longrightarrow}
\newcommand{\ra}{\rightarrow}
\newcommand{\Pd}{\PP^{3g-2}_D}
\newcommand{\su}{\mathcal{SU}_C(2)}
\begin{document}


\begin{center}
\LARGE{On rational maps between moduli spaces of curves and of vector bundles}\\
\small{A structure theorem for $\su$ and the moduli of pointed rational curves}
\end{center}

\vspace{2pt}

\begin{center}
\large{\textsc{Alberto Alzati, Michele Bolognesi}}
\end{center}




\begin{abstract}

\vspace{2pt} \footnotesize{ \noindent Let $\su$ be the moduli space of rank 2 semistable vector bundles with trivial determinant on a smooth complex algebraic curve $C$ of genus $g>1$, we assume $C$ non-hyperellptic if $g>2$. In this paper we construct large families of pointed rational normal curves over certain linear sections of $\su$. This allows us to give an interpretation of these subvarieties of $\su$ in terms of the moduli space of curves $\cM_{0,2g}$. In fact, there exists a natural linear map $\su \ra \PP^g$ with modular meaning, whose fibers are birational to $\cM_{0,2g}$, the moduli space of $2g$-pointed genus zero curves. If $g<4$, these modular fibers are even isomorphic to the GIT compactification $\cM_{0,2g}^{GIT}$.
The families of pointed rational normal curves are recovered as the fibers of the maps that classify extensions of line bundles associated to some effective divisors.}
\end{abstract}


\section{Introduction}

The first ideas about moduli of vector bundles on curves date back some eighty years, when in \cite{weilAB} for the first time the author suggested the idea that an analogue of Picard varieties could be provided by higher rank bundles. Then, in the second half of the last century, a more complete construction of these moduli spaces was carried out, mainly by Mumford, Newstead
\cite{MumNews} and the mathematicians of the Tata institute, e.g. \cite{ramanara}. Let us denote by $\cS\cU_C(r)$ the moduli space of semistable vector bundles of rank $r$ and trivial determinant on a smooth, complex curve $C$ of genus $g$. If $g\neq 2$ we will also assume throughout the paper that $C$ is not hyperelliptic.

Some spectacular results have been obtained on the projective structure of these moduli spaces in low genus and rank, especially thanks to the relation with the work on theta functions and classical algebraic geometry of A.B.Coble \cite{cob:agtf}. This interplay has produced a flourishing of beautiful results (see \cite{paulydual}, \cite{bove:cob}, \cite{orte:cob}, \cite{Quang}, or \cite{do:pstf} for a survey)  where both classical algebraic geometry and modern moduli theory come into play.

\smallskip

On the other hand, the theory of moduli spaces of curves is a cornerstone of modern algebraic geometry. Its importance lies not only in its own advances, but also in the impulse it has had in developing new technical tools that have remarkably improved the background of the modern algebraic geometer. Let us just mention as an example \cite{dm69}, where the notion of algebraic stack was first introduced.

\smallskip



The interaction between these two moduli theories has always proven to be fruitful and sometimes revolutionary. Among its highlights are without any doubt the results of Laszlo and Sorger on conformal blocks (see \cite{ivoecristo} and \cite{sorgerconformal}) or the beautiful paper by Kouvidakis on the Picard group of the relative moduli space of vector bundles over $\cM_g$ \cite{kuvi1}.

\smallskip

This paper explores further this interplay, by drawing a new link
between the two moduli theories. In fact, we describe a natural way
to construct large universal families of pointed rational curves
over some subvarieties of $\su$. We consider the morphism (where
$^*$ means dual vector spaces):

$$\su \ra \PP H^0(\su, \cL)^* = |\cL|^*$$

\noindent defined by the determinant bundle $\cL$. By identifying
this target projective space with the projectivized complete linear
system $|2\Theta|$ on the Picard variety $\pic^{g-1}(C)$, we are able to define a subspace $\PP_c \subset
|2\Theta|$ depending on the choice of an effective divisor $D$ of
degree $g$ on $C$ (see Section \ref{cuore}). By composing the
morphism $\su \ra |2\Theta|$ with the projection with center
$\PP_c$, we obtain a rational map $p_{\PP_c}:\su \dashrightarrow
|2D|\cong \PP^g$. Our main result then shows that this rational map
has a natural modular interpretation, as the generic fiber is
birational to $\cM_{0,2g}$.

\begin{theorem}\label{global}
Let $C$ be a smooth complex curve of genus $g>1$, non-hyperelliptic if $g>2$, and let $D$ be a fixed effective degree $g$ divisor on $C$. Then:

\begin{enumeratei}
\item There exists a rational fibration $p_{\PP_c}:\su \dashrightarrow
|2D|\cong \PP^g$ whose general fiber is birational to the moduli space $\cM_{0,2g}$ of $2g$-pointed genus zero curves.

\item The generic fiber $p_{\PP_c}^{-1}(N)$, for $N\in |2D|$ is dominated by a $2g$-pointed projective space $\PP^{2g-2}$; the fibers of the map are rational normal curves passing through the $2g$ fixed points. This family of rational curves induces the birational moduli map of point \textrm{(i)} via the universal property of $\cM_{0,2g}$.

\item There exists a birational inverse $\cM_{0,2g} \dashrightarrow p_{\PP_c}^{-1}(N)\subset \su$. The required vector bundle in $p_{\PP_c}^{-1}(N)$
is obtained as a twist by $\OO(D)$ of the kernel of a surjective sheaf morphism 
$\OO^{\oplus 2} \ra \mathcal{O}_N$ associated to a point configuration $(p_1,\dots,p_{2g})\in \cM_{0,2g}$, with $p_i\in \PP^1$. 
\end{enumeratei} 
\end{theorem}


For $g=2,3$ we have a more precise statement (see Proposition \ref{segre}).

\begin{theorem}
If $g=2,3$ then the general fiber of the fibration is isomorphic to the GIT compactification $\cM_{0,2g}^{GIT}$ of the moduli space of $2g$-pointed rational curves.
\end{theorem}

On the negative side, we show that if $g>3$ such an isomorphism is not possible (see Remark \ref{negative}).\\

The main tools of our proofs, apart from standard descriptions of the geometry of the Jacobian and of the theta series, are related to the results about the maps classifying extension classes described, in particular, in \cite{ab:rk2} and \cite{ln85}. In fact the rational normal curves that appear in Theorem \ref{global} parametrize certain extension classes and are the fibers of a classifying map.
In this paper the classifying maps under consideration will be the forgetful rational maps whose domain is $\PP \ext^1(L,L^{-1})$, for some line bundle $L$. These maps send an extension equivalence class

$$ 0 \ra L^{-1} \ra E \ra L \ra 0$$

\noindent to the corresponding bundle $E\in \su$, which in fact has clearly trivial determinant. Quite often these maps are not defined on all the projective space $\PP \ext^1(L,L^{-1})$ since this contains also non-semistable extensions. These non-semistable extension classes correspond to the points of certain varieties of secants of the projective model of the curve $C$ contained in $\PP \ext^1(L,L^{-1})=|K + 2L|^*$ (see Sect. 2.2 for details).\\

Let $U\subset \su$ be an open subset of a general fiber of $p_{\PP_c}$. The idea of our proof is to consider the closure of the fibers over $U$ of one of the classifying maps described above, namely

$$\varphi_D: \PP \ext^1(\cO_C(D),\cO_C(-D)) \dashrightarrow \su$$

\noindent for a general reduced effective degree $g$ divisor $D$, and show that they are a flat family of pointed rational curves over $U$. The existence of this family, by the universal property of $\cM_{0,2g}$, induces a birational map between the general fiber of $p_{\PP_c}$ and $\cM_{0,2g}$ (see Sections 5,6 and 7). A very naif picture is given in Figure \ref{naive}, in the last Section.

This is done via Kapranov's construction \cite{kapra} of the
Mumford-Knudsen compactification $\overline{\cM}_{0,n}$ as a blow-up
of the projective space $\PP^{n-3}$ and by considering the relation
between $\overline{\cM}_{0,n}$ and rational normal curves in
$\PP^{n-2}$ passing through $n$ fixed general points, as explored in
\cite{keelkernan}. In fact, recall that there exists a projective
model of $C\subset \PP \ext^1(\cO_C(D),\cO_C(-D))$ and let $U$ be a suitable open subset of the generic fiber of
$p_{\PP_c}$. Then there exist a $(2g-2)$-dimensional projective space
$\PP^{2g-2}\subset \PP \ext^1(\cO_C(D),\cO_C(-D))$, $2g$-secant to
$C$, such that the general rational normal curve in $\PP^{2g-2}$ passing through the
$2g$ secant points is contracted by $\varphi_D$ onto a point of $U$. This gives
the desired family over $U$. Finally, we also display a birational inverse map from $\cM_{0,2g}$ to $\su$, depending on the choice of a reduced divisor $N\in |2D|$. This consists in associating a surjective sheaf morphism 

\begin{equation}\label{display}
\OO^{\oplus 2} \ra \mathcal{O}_N
\end{equation}

to a configuration of points $(p_1,\dots,p_n)$ in $\PP^1$. The kernel of morphism (\ref{display}) is a rank 2 vector bundle and the inverse moduli map sends the configuration of points to the twist by $\OO(D)$ of this kernel bundle.\\


A final warning: we have always tried to state explicitly when it is important to consider the S-equivalence class of a decomposable vector bundle and not just the bundle. When it is not stated it is usually not relevant. We often also say $s$-class for S-equivalence class. The symbol $\equiv$ will mean linear equivalence of divisors and by $< S >$ we will denote the linear span of a subset $S$ of a projective space. 

\textit{Acknowledgment:} We would like to thank the referee for many useful comments and corrections that have helped us to improve in a significant way this paper. A special thank goes to Igor Dolgachev and Christian Pauly for inspiring conversations on the subjects of this paper.


\medskip

\textbf{Description of contents.}
In Section 2 we give a brief account of the $2\Theta$ linear series on the Picard variety $\pic^{g-1}(C)$ and its relation with $\su$ plus a description of the map $\varphi_D: \PP \ext^1(\cO_C(D),\cO_C(-D)) \dashrightarrow \su$ classifying extension classes, for an effective divisor $D$ of degree $g$.\\

\smallskip
   Section 3 deals mainly with the fibers of exceptional dimension of the preceding map and with another analogous map $\varphi_B$ that classifies extension classes in\\ $\PP \ext^1(\cO_C(B),\cO_C(-B))$, where $B\eqdef K-D$ is the "Serre dual" divisor of $D$. The image $Im(\varphi_B)$ of the corresponding classifying map $\varphi_B$ is also described. \\
\smallskip

In Section 4 we prove the core theorem of the paper, which describes in terms of vector bundles the restriction to $\su$ of the projection in $|2\Theta|$ with center $\langle Im(\varphi_B) \rangle$. It turns out to be the map that associates to $E\in \su$ the determinant of the two sections of $E\otimes\cO_C(D)$. The rest of the paper is devoted to describing the maps between the fibers of this projection and the moduli spaces $\cM_{0,2g}$.\\
\smallskip

This is done by observing (Section 5) that the generic fibers of $\varphi_D$ are rational normal curves passing through $2g$ fixed general points in $\PP^{2g-2}$ and using the results of \cite{kapra} that outline the classical bijection between these curves and the points of $\cM_{0,2g}$. Moreover the $g=2$ case is recalled from \cite{bol:kumwed} and interpreted in the light of our new results.\\
\smallskip

In Section 6 we describe in detail the case of genus $3$ by showing the birationality between the Coble quartic and a fibration in Segre cubics and finally in Section 7 we go through the relation of $\su$ with $\cM_{0,2g}$ for $g\geq 4$, completing the proof of Theorem \ref{global}.\\

\section{Moduli of Vector Bundles and Classification of Extensions}

\subsection{Vector bundles and theta linear systems}

Let $C$ be a smooth genus $g\geq 2$ algebraic curve, non-hyperelliptic if $g>2$. Let $\pic^d(C)$ be the Picard variety that parametrizes linear equivalence classes of all degree $d$ line bundles on $C$. $\pic^0(C)$ will be often denoted by $\Jac(C)$. We recall that there exists a canonical divisor $\Theta\subset \pic^{g-1}(C)$ that set theoretically is defined as

$$\Theta\eqdef\{L\in \pic^{g-1}(C)| h^0(C,L)\neq 0 \}.$$

Let moreover $\su$ be the moduli space of semi-stable rank 2 vector bundles on $C$ with trivial determinant. More precisely, $\su$ does not parametrize isomorphism classes of vector bundles on $C$, but S-equivalence classes. We recall this equivalence for the reader's covenience. It is well-known that every semistable vector bundle $E$ admits a \textit{Jordan-Holder filtration}

$$0 = E_0 \subsetneq E_1 \subsetneq \cdots \subsetneq E_{k-1} \subsetneq E_k = E,$$

\noindent such that each successive quotient $E_i/E_{i-1}$ is stable of slope equal to $\mu(E)$, for $i = 1, \dots , k.$ We call

$$gr(E) = \bigoplus_{i=1}^k E_i/E_{i-1}$$

\noindent the \textit{graded bundle} associated to $E$. Two semi-stable vector bundles $E$ and $E'$ on $C$ are said to be S-equivalent if $gr(E)\cong
gr(E')$. In particular, two stable bundles $E$ and $E'$ are S-equivalent if and only if they are isomorphic. \newline

It is well known that $\su$ is locally factorial and that $\pic (\su) = \ZZ$ \cite{dn:pfv}, generated by a line bundle
$\cL$ called the \textit{determinant bundle}. On the other hand, for $E\in \su$ let us define

$$\theta(E)\eqdef\{L\in \pic^{g-1}| h^0(C,E\otimes L)\neq 0\}.$$

While in higher rank there are examples of vector bundles $F$ such that $\theta(F)$ is the whole $\pic^{g-1}(C)$, if the rank is two then $\theta(E)$ is a divisor in the linear system of $2\Theta$ for every $E\in \su$. This gives the celebrated theta map

$$\theta:\su  \longrightarrow  |2\Theta|= \PP^{2^g-1}.$$

We recall that the linear system $|2\Theta|$ on $\pic^{g-1}(C)$ contains the Kummer variety $\Kum(C)$ of $C$. This is the quotient of the Jacobian of $C$ by the involution $x \mapsto -x$ and the map

\begin{eqnarray*}
k : \Jac(C) & \longrightarrow & |2\Theta|,\\
x & \mapsto & \Theta_x + \Theta_{-x},
\end{eqnarray*}

\noindent factors through an embedding $\kappa: \Kum(C) \hookrightarrow |2\Theta|$. From now on $\Kum(C)$ will be considered as a subvariety of $|2\Theta|$. The geometry of the Kummer variety is intricately related to the geometry of $\su$, in fact $\Kum(C)$ coincides exactly with the non-stable part of $\su \subset |2\Theta|$, which in fact consists of bundles of the form $L\oplus L^{-1}$, with $L \in \Jac(C)$.\\
One of the most striking properties of $\theta$ is the following.

\begin{theorem}\cite{bo:fib1}\label{wirti}
There is a canonical isomorphism

$$H^0(\su,\cL) \cong H^0(\pic^{g-1}(C), 2 \Theta)^*,$$

\noindent making the following diagram commutative

\ \ \ \ \ \ \ \ $\xymatrix{ & & |\cL|^* \ar[dd] \\
\Kum(C)\ar@{^{(}->}[r] \ar@/_/[drr]^{\kappa} & \su \ar[ur] \ar[dr]^{\theta} \\
 & & |2\Theta|  }$

\end{theorem}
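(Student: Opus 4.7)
The plan is to realize both cohomology spaces in the statement as fibre-wise incarnations of a single ``universal theta'' divisor on the product $\su \times \pic^{g-1}(C)$, then to identify them via a Künneth-type argument.

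First I would construct the locus
\[
\Delta := \{(E, L) \in \su \times \pic^{g-1}(C) : h^{0}(C, E \otimes L) \neq 0\}.
\]
Working with a (quasi-)universal family $\cE$ on $\su \times C$ and the Poincaré line bundle $\cP$ on $\pic^{g-1}(C) \times C$, the sheaf $\cE \boxtimes \cP$ on $\su \times \pic^{g-1}(C) \times C$ has fibres of degree $2(g-1)$ along $C$, so its (derived) pushforward to $\su \times \pic^{g-1}(C)$ is represented by a map of locally free sheaves of the same rank whose degeneracy locus is $\Delta$; hence $\Delta$ is a Cartier divisor and $\cO(\Delta)$ is expressed as the determinant of this complex. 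Restricting to $\{E\} \times \pic^{g-1}(C)$ gives $\theta(E)$, which is a divisor in $|2\Theta|$ as recalled in the preceding paragraph; restricting to $\su \times \{L\}$ gives a divisor $D_{L} \subset \su$ which, since $\pic(\su) = \ZZ \cdot \cL$, must be of the form $\cL^{\otimes n}$ for some $n \geq 1$.

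The next step is to check $n = 1$. I would do this by evaluating $\cO(\Delta)|_{\su \times \{L\}}$ on a test curve inside $\su$ along which $\cL$ has known degree, for example a Hecke line through a strictly semistable bundle $\cO_{C}\oplus \cO_{C}$, or the image of a $\PP^{1} \subset \PP\, \mathrm{Ext}^{1}(L^{-1},L)$ from the extension setup of the next section. Combined with the fibrewise computation on $\pic^{g-1}(C)$, this yields
\[
\cO_{\su \times \pic^{g-1}(C)}(\Delta) \cong \cL \boxtimes \cO(2\Theta),
\]
and by Künneth the defining section $s_{\Delta}$ lies in $H^{0}(\su, \cL) \otimes H^{0}(\pic^{g-1}(C), 2\Theta)$, which is the same as a linear map
\[
\Phi : H^{0}(\pic^{g-1}(C), 2\Theta)^{*} \longrightarrow H^{0}(\su, \cL).
\]
By construction, projectivising $\Phi$ on the $\su$-side is exactly $\theta$, so $\theta^{*}\cO(1) = \cL$ as soon as $\Phi$ is injective, which in turn follows from the fact that $\theta$ is non-degenerate (its image spans $|2\Theta|$). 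A dimension count then upgrades injectivity to an isomorphism: both spaces have dimension $2^{g}$, the right-hand side by the Verlinde formula for $\mathrm{SL}_{2}$ and the left-hand side by the classical $h^{0}(\pic^{g-1}(C), 2\Theta) = 2^{g}$.

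Finally, for the commutativity of the diagram, let $L \in Jac(C)$ correspond to $x \in Jac(C)$ and take $E = L \oplus L^{-1}$ as the image of $x$ under $Kum(C) \hookrightarrow \su$. Then
\[
\theta(E) = \{M \in \pic^{g-1}(C) : h^{0}(L\otimes M) \neq 0 \text{ or } h^{0}(L^{-1}\otimes M) \neq 0\} = \Theta_{-L} + \Theta_{L},
\]
which coincides with $k(x) = \Theta_{x}+\Theta_{-x}$; since $k$ is invariant under $x \mapsto -x$ it factors as $\kappa$ through $Kum(C)$, giving $\theta \circ i = \kappa$ and closing the diagram.

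The real obstacle is the identification $\cO(D_{L}) \cong \cL$ (rather than $\cL^{\otimes n}$ for $n>1$); everything else is either formal (Künneth, dimension counting) or a direct fibre computation. This normalisation is the part that genuinely uses the fine structure of $\su$ and for which I would rely on a Hecke-curve or extension-curve test, as in \cite{bo:fib1}.
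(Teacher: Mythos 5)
The first thing to say is that the paper contains no proof of this statement at all: it is quoted, as the bracketed citation indicates, from Beauville \cite{bo:fib1}, so the only meaningful comparison is with that original argument. Your proposal is in essence a reconstruction of the standard route: the incidence divisor $\Delta \subset \su \times \pic^{g-1}(C)$ realized as the degeneracy locus of a two-term complex of equal ranks (correct, since $\chi(E\otimes L)=0$ here), the identification $\cO(\Delta)\cong \cL\boxtimes\cO(2\Theta)$ using $\pic(\su)=\ZZ\cdot\cL$ plus a normalization on a test curve, the K\"unneth decomposition of $s_\Delta$ producing $\Phi$, injectivity from non-degeneracy of $\theta$, surjectivity by a dimension count, and the computation $\theta(L\oplus L^{-1})=\Theta_x+\Theta_{-x}$ for the diagram. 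As a blueprint this is sound.

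Two caveats, one of which is a real gap in the written order of your argument. You invoke as a ``fact'' that the image of $\theta$ spans $|2\Theta|$, but this is not free: the standard (and essentially only elementary) way to prove it is exactly the Kummer computation you postpone to the last paragraph. Once you know $\theta(L\oplus L^{-1})=\Theta_x+\Theta_{-x}$, Riemann's addition formula $\theta(z+x)\theta(z-x)=\sum_{\varepsilon}\theta[\varepsilon](2z)\,\theta[\varepsilon](2x)$ shows that the products $\theta_x\theta_{-x}$ span $H^0(\pic^{g-1}(C),2\Theta)$, hence already the restriction of $\theta$ to the semistable boundary is non-degenerate. So the Kummer step must come \emph{before} the injectivity step; as ordered, non-degeneracy is unsupported. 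Second, the remaining inputs are external: surjectivity via the Verlinde formula ($2^g$ for $\mathrm{SL}_2$ at level one) is correct and non-circular, but it is machinery the cited 1988 proof predates, and you explicitly defer the crucial normalization $n=1$ to \cite{bo:fib1} itself; so what you have is a proof modulo two substantial borrowed facts, which is acceptable for a result the paper anyway treats as known, but should be stated as such. Finally, do not underestimate the parenthetical ``(quasi-)universal'': $\su$ is not a fine moduli space, so $\Delta$ must be constructed on the Quot scheme or the moduli stack and descended --- a quasi-universal family of rank $2m$ naively produces $m\Delta$, and untangling this is where the honest technical work in the degeneracy-locus construction lies.
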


Furthermore, thanks to \cite{BVtheta} and \cite{vgiz} it is known that $\theta$ is an embedding for every $g\geq 2$.
  \\

The lower genus $\su$ moduli spaces deserve a special mention for their significant and beautiful geometry.

\begin{itemize}
\item If $g=2$ then $\su\cong |2\Theta|= \PP^3$ and its semi-stable boundary is the well known Kummer quartic surface. \cite{rana:cra}
\item If $g=3$ then Ramanan and Narashiman \cite{rana:cra} showed that $\su$ is a quartic hypersurface in $\PP^7\cong|2\Theta|$ which is singular along $\Kum(C)$. Several years earlier Coble \cite{cob:agtf} had showed the existence of a unique such quartic hypersurface, that nowadays is named after him. Remarkably, this variety is also self-dual \cite{paulydual}.

\end{itemize}

\subsection{The classifying maps}\label{sectclass}

Let $D$ be a general degree $g$ effective divisor on $C$ and let us introduce the $(3g-2)$-dimensional projective space

$$\PP^{3g-2}_D\eqdef\PP \ext^1(\OO(D),\OO(-D))=|K+2D|^*.$$

A point $e\in \Pd$ corresponds to an isomorphism class of extensions

$$0\lra \OO(-D) \stackrel{i_e}{\lra} E_e \stackrel{\pi_e}{\lra} \OO(D) \lra 0.\ \ \ \ \ (e)$$

There is a natural rational, surjective, forgetful map $\varphi_D$ that is defined in the following way:

\begin{eqnarray*}
\varphi_D: \Pd & \dashrightarrow & \su, \\
e & \mapsto & E_e.
\end{eqnarray*}

This kind of classifying map has been described by Bertram in \cite{ab:rk2}. In our case Theorem 2 of Bertram's paper gives an isomorphism

\begin{equation}\label{bertro}
H^0(\su,\cL) \cong H^0(\Pd,\cI_C^{g-1}(g)),
\end{equation}

\noindent where $\cI_C$ is the ideal sheaf of the degree $4g-2$ curve $C\subset \Pd$ embedded by $|K+2D|$. This means that the classifying map is given by the full linear system of forms of degree $g$ that vanish with multiplicity at least $g-1$ on $C$.

Throughout the paper, we will make massive use of the following Proposition, that comes directly from \cite{ln85} Prop. 1.1 (see also \cite{oxpau:heis} Remark 8.2).

\begin{proposition}\label{span}
Let us fix an effective divisor $G$ on $C$, $(e)$ any extension class in $\Pd$ and $E_e$ the corresponding rank 2 vector bundle. We denote by $<G>$ the linear span of $G$ in $\Pd$. Then, $e\in <G>$ if and only if there is a sheaf injection $\cO_C(D-G) \stackrel{i_G}{\hookrightarrow} E_e$ such that $G$ is the zero divisor of $\pi_e \circ i_G$.  We will refer to such an extension class by saying that $(e)$ has a sheaf injection \textit{vanishing on} $G$ for.
\end{proposition}

In the following we will denote by $\Sec^n(C)$ the variety of $n$-secant $(n-1)$-planes.


\noindent The following Lemma is an easy consequence of Prop. \ref{span}.

\begin{lemma}\label{linear}
Let $(e)$ be an extension class in $\Pd$, then the vector bundle $E_e$ is not semi-stable if and only if $e\in \Sec^{g-1}(C)$ and it is not stable if $e \in \Sec^{g}(C)$.
\end{lemma}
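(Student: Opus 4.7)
The plan is to prove both statements by translating the algebraic condition of (non)(semi)stability into the geometric condition about lying on a secant variety, using Remark \ref{span} as the dictionary between the two.

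First I would recall the standard reformulation of (semi)stability for our situation: since $E_e$ has rank $2$ and degree $0$, it fails to be semistable exactly when it admits a line sub-bundle of degree $\geq 1$, and it fails to be stable exactly when it admits a line sub-bundle of degree $\geq 0$. The ``easy'' direction is then immediate: if $e \in \Sec^{g-1}(C)$ (respectively $\Sec^g(C)$), then $e$ lies in the linear span $\overline{B}$ of some effective divisor $B$ with $\deg B \leq g-1$ (respectively $\leq g$), so by Remark \ref{span} the line bundle $\cO_C(D-B) \subset E_e$ is a sub-bundle of degree $g - \deg B \geq 1$ (respectively $\geq 0$), which destabilizes $E_e$.

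For the converse direction I would start from a destabilizing line sub-bundle $L \hookrightarrow E_e$, which I may assume saturated (hence a sub-bundle), of degree $d \geq 1$ in the non-semistable case (resp.\ $d \geq 0$ in the non-stable case). Composing with the quotient map $E_e \twoheadrightarrow \cO_C(D)$ yields a morphism $L \to \cO_C(D)$. If this composition vanishes then $L$ factors through $\cO_C(-D)$, forcing $d \leq -g < 0$, a contradiction. Hence the composition is nonzero, so twisting by $L^{-1}$ produces a nonzero section of $\cO_C(D) \otimes L^{-1}$; this section cuts out an effective divisor $B$ of degree $g - d$, and we obtain the identification $L = \cO_C(D-B)$. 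Applying Remark \ref{span} in the other direction, $e \in \overline{B} \subseteq \Sec^{g-d}(C)$, which is contained in $\Sec^{g-1}(C)$ when $d \geq 1$ and in $\Sec^g(C)$ when $d \geq 0$.

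The only genuinely delicate point, and what I would flag as the main obstacle, is the passage from an arbitrary destabilizing line sub-sheaf to one of the specific form $\cO_C(D-B)$ with $B$ effective, since Remark \ref{span} is formulated only for sub-bundles of that shape. This is why saturating $L$ and separately ruling out the case $L \subset \cO_C(-D)$ are both essential; once these two technicalities are cleared, the rest is a direct application of the remark, together with the evident inclusions $\Sec^k(C) \subseteq \Sec^{k+1}(C)$ obtained by adding a base point to an effective divisor.
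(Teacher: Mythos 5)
Your proposal is correct and follows exactly the route the paper intends: the paper gives no argument beyond declaring the lemma "an easy consequence of Remark \ref{span}", and your proof is precisely the careful fleshing-out of that remark, translating destabilizing sub-bundles of the degree-zero bundle $E_e$ into sub-bundles of the form $\cO_C(D-B)$ with $B$ effective (with the saturation and the exclusion of $L \subset \cO_C(-D)$ correctly handled), and then reading off membership in $\Sec^{g-1}(C)$, respectively $\Sec^{g}(C)$. In fact you prove slightly more than stated, since you also establish the converse implication for the non-stable case.
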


One can draw some conclusions from Lemma \ref{linear}. The first one
is the following.

\begin{corollary}
The image of the secant variety $\Sec^g(C)$ via the classifying map $\varphi_D$ is the Kummer variety $\Kum(C)$.
\end{corollary}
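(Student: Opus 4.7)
The plan is to prove the two inclusions $\varphi_D(\Sec^g(C)) \subseteq Kum(C)$ and $Kum(C) \subseteq \varphi_D(\Sec^g(C))$ separately, using Lemma \ref{linear} for the first and Remark \ref{piani} for the second, together with the identification (recalled in Section~2) of the strictly semi-stable locus of $\su$ with $Kum(C)$.

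For the forward inclusion, note first that $\varphi_D$ is undefined on $\Sec^{g-1}(C)$, so I only have to consider $e \in \Sec^g(C) \setminus \Sec^{g-1}(C)$. By Lemma \ref{linear} the bundle $E_e$ is then semi-stable but not stable, hence S-equivalent to $L \oplus L^{-1}$ for some $L \in Jac(C)$. Since the strictly semi-stable points of $\su$ are precisely the elements of $Kum(C)$, the image $\varphi_D(e)$ lies in $Kum(C)$.

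For the reverse inclusion, I start from a general point $[L \oplus L^{-1}] \in Kum(C)$. By Remark \ref{piani} we can write $L = \cO_C(D-B)$ for a unique effective divisor $B = x_1 + \cdots + x_g$ on $C$. Its linear span $\overline{B} \subset \Pd$ is a $(g-1)$-plane through $g$ points of $C$, hence is contained in $\Sec^g(C)$ by definition of the secant variety. Picking any $e \in \overline{B}$ at which $\varphi_D$ is defined, Remark \ref{span} gives $L = \cO_C(D-B) \hookrightarrow E_e$ as a sub-bundle; since $E_e$ has trivial determinant and is strictly semi-stable (being in $\Sec^g(C)$), it is S-equivalent to $L \oplus L^{-1}$, so $\varphi_D(e) = [L \oplus L^{-1}]$. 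The $2$-torsion case, in which the two effective divisors of Remark \ref{piani} coincide and the fiber becomes a double $\PP^{g-1}$, is handled in exactly the same way.

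The main subtlety I anticipate is the passage from the actual isomorphism class of the extension $E_e$ to its S-equivalence class in $\su$: for generic $e \in \Sec^g(C) \setminus \Sec^{g-1}(C)$ the bundle $E_e$ is itself a non-trivial extension of $L^{-1}$ by $L$ and only its S-equivalence class $[L \oplus L^{-1}]$ is a well-defined point of $\su$. Once this identification is made cleanly, the corollary is a direct consequence of Lemma \ref{linear} and Remark \ref{piani}.
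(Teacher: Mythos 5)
Your proof is correct and follows essentially the same route as the paper, which states this corollary without a separate proof precisely because it is the immediate combination of Lemma \ref{linear} (points of $\Sec^g(C)\setminus\Sec^{g-1}(C)$ give strictly semi-stable bundles, hence Kummer points) and Remark \ref{piani} (general Kummer points have fibers spanned by effective degree-$g$ divisors, hence lying in $\Sec^g(C)$) --- exactly the two ingredients you invoke. Your explicit attention to the S-equivalence identification is a sound clarification rather than a deviation.
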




On the other hand the linear system $|\mathcal{I}_C^{g-1}(g)|$ can be described in terms of a secant variety of $C$.

\begin{lemma}\label{luoghi}
The two linear systems $|\mathcal{I}_C^{g-1}(g)|$ and $|\ki_{\Sec^{g-1}(C)}(g)|$ on $\Pd$ coincide.
\end{lemma}

\begin{proof}
In order to show the claim it is useful to consider the elements of the two linear systems as symmetric $g$-linear forms on the vector space $H^0(C,K+2D)^*$. Let $F$ be such a form. Then, $F$ belongs to $|\ki^{g-1}_C(g)|$ if and only if 
\begin{equation}\label{conditio}
F(p_1,\dots, p_g)=0,\ \mathrm{for\ any\ }p_i\in C\ \mathrm{and\  }p_k=p_j\ \mathrm{for\ some\ }1\leq k,j \leq g.
\end{equation}

On the other hand, a form $G$ belongs to $|\ki_{\Sec^{g-1}(C)}(g)|$ if and only if $G(p,p,\dots,p)=0$ for any point $p\in \Pd$ that can be written as $p=\sum_{i=1}^{g-1}\lambda_i p_i$, for any scalars $\lambda_i$, and $p_i\in C$, $i=1,\dots, g-1$. Now, let us develop $G$ as a polynomial in $\lambda_i$. Appropriate choices of $\lambda_i$ show that $G$ verfies exactly condition (\ref{conditio}). On the other hand, let us develop in the same way $F(p,p,\dots,p)$. It is not hard to see that $F$ vanishes identically for any choice of the $\lambda_i$, by using condition (\ref{conditio}).
\end{proof}


\smallskip





In the next section we will describe the fibers of dimension bigger
than expected over stable bundles. Now we give a description of
the fibers of $\varphi_D$ over the points of $\Kum(C)$. This is completely
different from that of the fibers over the generic stable bundle,
and rather technical. The next proposition is not necessary in order to understand the main results of this
paper, but we included it for sake of completeness. The impatient
reader can skip it, and he can do the same with Section
\ref{pencil}.

\begin{proposition}\label{piani}
Let $L\in \Jac(C)$ be a general degree 0 line bundle and
let $\Jac(C)[2]$ be the group of 2-torsion points of $\Jac(C)$. Then the fiber of $\varphi_D$ over the S-equivalence class of $L\oplus L^{-1}$ is given by two $\PP^{g-1}\subset \Pd$. If $L\in \Jac(C)[2]$ the two $\PP^{g-1}$ coincide.
\end{proposition}

\begin{proof}
Let $L\oplus L^{-1}$ be a general point of $\Kum(C)$.

\smallskip

The Abel-Jacobi map

\begin{eqnarray}\label{abeljaco}
\sigma_g:\sym^g (C) & \lra & \Jac(C),\\
p_1+\cdots+p_g & \mapsto & \OO(D - p_1 - \cdots -p_g)
\end{eqnarray}

\noindent is surjective and generically one-to-one. In this proof we will consider only decomposable bundles that are contained in the image in $\Kum(C)$ of the one-to-one locus. The locus where the fiber of $\sigma_g$ is positive dimensional has codimension 2 in $\Kum(C)$ and it will be described later, in Proposition \ref{exkum}. For a general $L$, there exists only one effective divisor $x_1+ \cdots +x_g$ such that $L\equiv \cO_C(D-x_1+ \cdots +x_g)$. Similarly, there exists a unique effective divisor $y_1+\dots + y_g$ (linearly equivalent to $2D - x_1 -\dots - x_g$) such that $L^{-1}\equiv \cO_C(D - y_1 \cdots - y_g).$\\

Now let us consider the two projective spaces $\PP^{g-1}\subset \Pd$ spanned respectively by $\sum_{i=1}^gx_i$ and $\sum_{i=1}^gy_i$. By Prop. \ref{span}, the image via $\varphi_D$ of any extension class $(e)$ contained in these two $(g-1)$-planes is exactly the $s$-class of $L \oplus L^{-1}$. In fact, let us call $E_e$ the rank two bundle in the middle of the exact sequence defining $(e)$. If $(e)$ is contained in one of the two projective spans, then $E_e$ contains either $L$ or $L^{-1}$ as a subsheaf. Hence $E_e$ is S-equivalent to $L\oplus L^{-1}$, since it has trivial determinant.

\smallskip

Let us show now the opposite inclusion. Let us consider a vector bundle $E$ in the S-equivalence class of $L \oplus L^{-1}$. Then either $L$ or $L^{-1}$ is a subsheaf of $E$. Suppose that $L$ injects in $E$ and let us consider an extension $(e)$ such that $\varphi_D(e)=E$. Then the composed map $L \hookrightarrow E \stackrel{\pi_e}{\lra} \cO_C(D)$ has zero divisor equal to $\sum_i x_i$ and, by Prop. \ref{span}, we have that $(e)\in <x_1+\dots + x_g>$. If $L\not\subset E$ then $L^{-1}\subset E$ and $(e)\in <y_1+\dots + y_g>$.

Moreover, if $L\in \Jac(C)[2]$ then $L\cong L^{-1}$ and the two effective divisors just described coincide, hence in this case the fiber is a double $\PP^{g-1}$.
\end{proof}

In the next section we will describe the exceptional fibers over stable bundles.

\section{A dual classifying map and the exceptional fibers}\label{excess}

\subsection{Exceptional fibers of $\varphi_D$ over stable bundles}

We recall that $\dim(\su)=3g-3$, hence the generic fiber of $\varphi_D$ has dimension one. In fact for a general stable bundle $E\in \su$, we have dim$(\PP H^0(C,E\otimes\OO(D)))=1$  and the generic point of $\PP(H^0(C,E\otimes\OO(D)))$ defines in fact an extension in $\Pd$ (some of the points of $\PP(H^0(C,E\otimes\OO(D)))$ are just sheaf injections). 
However there is a proper subset of $\su$ made up of stable bundles for which $h^0(C,E\otimes \OO(D))>2$ and thus dim$(\varphi_D^{-1}(E))>1$. In order to describe these particular bundles we introduce the "Serre dual" divisor

\begin{equation}\label{defB}
B\eqdef K - D.
\end{equation}

\noindent Note that $deg(B)=g-2$. Of course, we can define a rational map

$$\varphi_B:\PP \ext^1(\OO(B),\OO(-B))\arr \su$$

\noindent analogous to $\varphi_D$ that classifies the extensions of the following type:

$$0 \arr \OO(-B) \arr E \arr \OO(B) \arr 0.$$

\newcommand\Pb{\PP_B^{3g-6}}

We remark that also in this case we have

\begin{equation}\label{defPB}
\PP_B^{3g-6}\eqdef\PP \ext^1(\OO(B),\OO(-B))=|K+2B|^*.
\end{equation}

Let us denote by $\overline{\varphi_B(\PP_B^{3g-6})}$ the closure of the image in $\su$.

\begin{proposition}
Let $E\in \su$ be a stable bundle, then $\dim(\varphi_D^{-1}(E))\geq 2$ if and only if $E$ is contained in the image of $\varphi_B$.
\end{proposition}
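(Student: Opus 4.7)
The plan is to compute $\dim \varphi_D^{-1}(E)$ via Riemann--Roch and Serre duality, and then interpret the resulting arithmetic condition as membership in $\im(\varphi_B)$.

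First I would identify the fiber over a stable $E$. Since $\aut(E) = \CC^*$, extension classes in $\Pd$ mapping to $E$ correspond to sub-bundle inclusions $\OO(-D) \hookrightarrow E$ taken up to scalar, i.e., to the Zariski open locus of nowhere-vanishing sections inside $\PP H^0(C, E \otimes \OO(D))$. Whenever the fiber is non-empty this gives
$$\dim \varphi_D^{-1}(E) \;=\; h^0(C, E \otimes \OO(D)) - 1.$$

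Next I would apply Riemann--Roch to the rank $2$, degree $2g$ bundle $E \otimes \OO(D)$, so that $\chi(E \otimes \OO(D)) = 2g + 2(1-g) = 2$. Combining Serre duality with $E \cong E^\vee$ (forced by $\det E = \OO$),
$$h^1(E \otimes \OO(D)) \;=\; h^0(E \otimes K \otimes \OO(-D)) \;=\; h^0(E \otimes \OO(B)),$$
since $B = K - D$. Hence $\dim \varphi_D^{-1}(E) = 1 + h^0(C, E \otimes \OO(B))$, and the condition $\dim = 2$ becomes $h^0(E \otimes \OO(B)) = 1$.

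The final step is to observe that $h^0(E \otimes \OO(B)) \geq 1$ is equivalent to the existence of a non-zero morphism $\OO(-B) \to E$, automatically injective as a sheaf map. For stable $E$ with $h^0 = 1$, this map exhibits $\OO(-B)$ as a sub-bundle of $E$, producing a short exact sequence $0 \to \OO(-B) \to E \to \OO(B) \to 0$ that places $E$ in $\im(\varphi_B)$; the converse is immediate. The main technical obstacle is this saturation step: a strictly larger saturation $\OO(-B + D')$ with $D' > 0$ effective would present $E$ as an extension in $\ext^1(\OO(B - D'), \OO(-B + D'))$, whose associated long exact sequence injects $H^0(\OO(D'))$ into $H^0(E \otimes \OO(B))$; reconciling this with $h^0 = 1$ requires care, and here I would appeal to the Pareschi--Popa identification of $\im(\varphi_B)$ with a specific linear subspace of $|2\Theta|$ in order to pin down the $(3g-6)$-dimensional target unambiguously.
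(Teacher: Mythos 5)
Your proposal takes essentially the same route as the paper's own argument: identify the fiber over a stable $E$ with an open subset of $\PP H^0(C,E\otimes\OO(D))$, apply Riemann--Roch and Serre duality (using $E\cong E\dual$) to get $h^0(C,E\otimes\OO(D))=h^0(C,E\otimes\OO(B))+2$, and translate $h^0(C,E\otimes\OO(B))>0$ into the existence of a nonzero map $\OO(-B)\to E$, hence membership in the image of $\varphi_B$. The saturation subtlety you flag at the end is genuine (a nonzero section of $E\otimes\OO(B)$ vanishing on some $D'>0$ only exhibits $\OO(-B+D')$, not $\OO(-B)$, as a sub-bundle), but the paper's proof does not address it at all --- it simply asserts the implication --- so on this point your attempt is, if anything, more careful than the original, even though the appeal to Pareschi--Popa you sketch concerns the linear system defining $\varphi_B$ rather than its set-theoretic image and would not by itself close that step.
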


\begin{proof}
Let $E$ be a stable bundle, then by Riemann-Roch and Serre duality Theorems we have that

$$h^0(C,E\otimes\OO(D))= h^0(C,E\otimes \OO(B)) +2g -2(g-1),$$

\noindent which implies that $h^0(C,E\otimes\OO(D))>2$ if and only if there exists a map\\ $\OO(-B)\arr E$. In turn this means that $E$ is in the closure of the image of  $\varphi_B$. In fact, the vector bundles contained in $\overline{\varphi_B(\PP_B^{3g-6})}$ are exactly those that admit a non-zero sheaf morphism $\cO_C(-B)\ra E$.
\end{proof}

 If $g>2$, there exists a projective model of the curve $C\subset \Pb$ of degree $4g-6$ embedded by $|K+2B|^*$ and, by Prop. \ref{span}, $\Sec^{g-3}(C)$ is the locus of non-semistable extensions in $\Pb$. Note that, of course, if $g=2$ then $\Pb$ is just a point and there is no curve contained therein. By a conjecture of Oxbury and Pauly (\cite{oxpau:heis}, Conj. 10.3), subsequently proved by Pareschi and Popa (\cite{papo}, Thm. 4.1), the map $\varphi_B$ is given by the complete linear system $|\cI_C^{g-3}(g-2)|$ on $\PP_B^{3g-6}$. This linear system (see the proof of the same conjecture) has projective dimension $\left[\sum_{i=0}^{g-2}\left(\begin{array}{c}
g \\ i\\
\end{array}\right)\right]-1$ and it is identified with a linear subspace of $|2\Theta|$. Moreover the image in $|\cI_C^{g-3}(g-2)|$ of the open semistable locus of $\Pb$ is non degenerate and by definition it is contained in $\su$.

\subsection{Exceptional fibers of $\varphi_D$ over non-stable bundles}\label{pencil}

In Prop. \ref{piani} we described the fibers of $\varphi_D$ over general points
of $\Kum(C)$. We will see now that the fibers of $\varphi_D$ over
$\Kum(C)$ that have exceptional dimension are exactly those over
$\Kum(C)\cap \overline{\varphi_B(\PP_B^{3g-6})}$. These fibers have
dimension bigger than those over the generic decomposable bundle. In order to show this, we start by
remarking that the Abel-Jacobi map defined in (\ref{abeljaco})
is surjective, generically one-to-one and its fibers have positive dimension exactly over the $(g-2)$-dimensional subvariety of the Jacobian

\begin{equation}\label{symdue}
\sym^{g-2}_BC\eqdef \{L\in \Jac(C)| L\cong \OO(-B+ q_1 + \cdots + q_{g-2}),\ q_i\in C\}.
\end{equation}

In fact, for any line bundle $L\in \Jac(C)$, the fiber of the map
(\ref{abeljaco}) over $L$ has dimension $h^1(C,\cO_C(D-L))$ and
$h^1(C,\cO_C(D-L))>0$ if and only if $h^1(C,\cO_C(D-L))=h^0(C,\cO_C(K-D+L))=h^0(C,\cO_C(B+L))>0$,
i.e. when $L\in\sym^{g-2}_BC$. In the proof of the following
Proposition we use in an important way the assumptions of generality that
we make on the choice of $D\in \sym^g(C)$ and of $Q\in \sym^{g-2}(C)$.
In fact, particular choices of both divisors lead to geometric
configurations too intricate to be described in a reasonable space.

\begin{proposition}\label{exkum}
Let $E\in \overline{\varphi_B(\PP_B^{3g-6})}$ be a general semistable not stable bundle, then $\varphi_D^{-1}(E)\subset \Pd$ has two components: a rational family of $\PP^{g-1}$ and a $\PP^{g-1}$.
\end{proposition}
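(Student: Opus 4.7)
The plan is to combine Remark~\ref{span} with the geometric Riemann--Roch theorem. First I would write $E\cong L\oplus L^{-1}$, as any strictly semistable bundle with trivial determinant is so represented in $\su$. The hypothesis $E\in\varphi_B(\PP_B^{3g-6})$ says that $\OO(-B)$ sits as a subbundle of $E$, which (after possibly swapping $L$ with $L^{-1}$) forces $L\in\sym^{g-2}_B C$; equivalently $L\cong\OO(-B+q_1+\cdots+q_{g-2})$ for some effective divisor of degree $g-2$, so that $D-L\sim K-q_1-\cdots-q_{g-2}$. This is precisely the description of the positive-dimensional fibre locus of the Abel--Jacobi map $\sigma_g$ from \eqref{abeljaco} and the remark following it.

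Next I would invoke Remark~\ref{span}: for an effective divisor $B_0$ of degree $g$, $e\in\overline{B_0}$ if and only if $\OO(D-B_0)\subset E_e$ as a subbundle. Since for $L\not\cong L^{-1}$ the degree-zero line subbundles of $L\oplus L^{-1}$ are exactly $L$ and $L^{-1}$, and since any extension class whose middle term contains $L$ or $L^{-1}$ as a subbundle is S-equivalent to $L\oplus L^{-1}$, the fibre decomposes as
\[
\varphi_D^{-1}(E) \;=\; \bigcup_{B_0\in|D-L|}\overline{B_0}\ \cup\ \bigcup_{B_0'\in|D-L^{-1}|}\overline{B_0'}.
\]

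Finally I would compute both linear systems. The geometric Riemann--Roch on the non-hyperelliptic curve $C$ gives $\dim|K-q_1-\cdots-q_{g-2}|=1$, so $|D-L|$ is a pencil whose general member is an effective divisor of $g$ points spanning a $\PP^{g-1}$ in $\Pd$; as the member varies these $\PP^{g-1}$'s sweep out the asserted pencil of $\PP^{g-1}$'s, of total dimension $g$, and distinct members give distinct spans because pairwise intersections of $\PP^{g-1}$'s in $\Pd=\PP^{3g-2}$ have expected dimension $(g-1)+(g-1)-(3g-2)=-g<0$ and actual common base locus on $C$ is empty for general $q_i$. The step I expect to be the main obstacle is handling the second summand $|D-L^{-1}|$: Riemann--Roch yields $h^0(D-L^{-1})=1+h^0(2B-q_1-\cdots-q_{g-2})=1$ generically (since $2B-\sum q_i$ has degree $g-2$ and is non-effective for general $q_i$), so it contributes only a single additional $\PP^{g-1}$ of dimension $g-1$, strictly smaller than the $g$-dimensional pencil; hence the top-dimensional component of $\varphi_D^{-1}(E)$ is the claimed pencil of $\PP^{g-1}$'s.
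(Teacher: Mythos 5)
Your proof is correct, and its two key ingredients are exactly the paper's: the Lange--Narasimhan criterion (Remark~\ref{span}) and the Riemann--Roch computation showing that for $L\cong\OO(Q-B)$, $Q=q_1+\cdots+q_{g-2}$, the linear system $|D-L|=|K-Q|$ is a pencil whose members each span a $\PP^{g-1}$ lying in the fiber. The structural difference is what you do with those ingredients. The paper's proof only exhibits the pencil \emph{inside} $\varphi_D^{-1}(E)$ (it concludes that the sub-bundle $\OO(Q-B)$ ``forces the fiber to contain a one dimensional family of $\PP^{g-1}$'') and then spends the rest of the argument describing that pencil geometrically, via a projection off the span of $2D$ followed by a projection off $Q$. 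You instead decompose the whole fiber: an extension class maps to $E$ precisely when its middle term contains $L$ or $L^{-1}$ as a degree-zero sub-bundle, whence
\[
\varphi_D^{-1}(E)\;=\;\bigcup_{B_0\in|D-L|}\overline{B_0}\ \cup\ \bigcup_{B_0'\in|D-L^{-1}|}\overline{B_0'},
\]
and you then check that the second union is generically a single $\PP^{g-1}$. This buys a completeness the paper's argument does not have: it shows the fiber is the pencil \emph{plus} one residual $\PP^{g-1}$ (so the proposition as stated is accurate only up to this lower-dimensional piece), and it is consistent with Remark~\ref{piani}, where the general Kummer fiber is a union of two $\PP^{g-1}$'s, of which only the one attached to $L$ degenerates into a pencil when $L\in\sym^{g-2}_BC$. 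Two small repairs on your side: the genericity hypotheses should be flagged explicitly ($h^0(Q)=1$, so that $h^0(K-Q)=2$ --- an assumption the paper also makes silently --- and $2B-Q$ non-effective); and the ``expected dimension'' argument for distinctness of the spans is not a proof, since special configurations can have excess intersection. It is cleaner to observe that $K+2D-S\sim 2D+Q$ has degree $3g-2\geq 2g$, hence is base-point-free, so $\overline{S}\cap C=S$ and distinct divisors of the pencil span distinct $\PP^{g-1}$'s.
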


\begin{proof}
We remark that the general decomposable vector bundle contained in \\ $\overline{\varphi_B(\PP_B^{3g-6})}$ is of type $\OO(-B+Q)\oplus \OO(B-Q)$ for some effective degree $g-2$ divisor $Q\in \sym^{g-2}C$. In fact on the one hand we have

$$h^0(C,\cO_C(B)\otimes (\cO_C(Q-B)\oplus \cO_C(B-Q)))>0;$$

\noindent on the other hand, whenever $h^0(C,\cO_C(B)\otimes (L\oplus L^{-1}))\neq 0$ and $L\in \Jac(C)$, either $L$ or $L^{-1}$ is linearly equivalent to a line bundle of type $\cO_C(Q-B)$. The vector bundles of type $\OO(-B+Q)\oplus \OO(B-Q)$ are exactly the image in $\Kum(C)=\Jac(C)/\pm Id$ of the variety $\sym^{g-2}_BC \subset \Jac(C)$ via the usual quotient.

\smallskip

Let us describe the closure of the fiber of $\varphi_D$ over
$\OO(-B+Q)\oplus \OO(B-Q)$, when $Q$ is a general element of
$\sym^{g-2}C$. We claim that the fiber of $\varphi_D$ over the
$s$-class of the generic $\OO(-B+Q)\oplus \OO(B-Q)$ is given by two
components: the 1-dimensional rational family of $\PP^{g-1}$ spanned
in $\Pd$ by degree $g$ divisors of $|K-Q|\cong \PP^1$ plus a
$\PP^{g-1}$ spanned in $\Pd$ by the only effective divisor in
$|D-B+Q|$. This comes from Proposition \ref{span}, applied exactly as in Proposition \ref{piani}: if a vector bundle is of the form $L\oplus L^{-1}$, for $L\in \Jac(C)$, then the fiber of $\varphi_D$ is the union of the spans in $\Pd$ of the effective divisors $F$ such that $D-F\equiv L$ or $L^{-1}$. The only difference is that now we are looking at the locus where the map $\sigma_g$ of (\ref{abeljaco}) has positive dimensional fibers.\\

First, let us then describe the fibers of the map (\ref{abeljaco}) over
line bundles $\cO_C(Q-B)$ and $\cO_C(B-Q)$. If $L=\cO_C(Q-B)$ then $\cO_C(D-L)=\cO_C(K-Q)$, hence
$h^0(C,\cO_C(D-L))=2$ by the geometric Riemann-Roch Theorem. If $L=\cO_C(B-Q)$ we have $h^0(C,\cO_C(D-L))=h^0(C,\cO_C(D-B+Q))=1$ for a general $Q\in \sym^{g-2}(C)$. In fact $h^0(C,\cO_C(D-B+Q))=h^1(C,\cO_C(D-B+Q)+1=h^0(C,\cO_C(2B-Q))+1$ and we have $h^0(C,\cO_C(2B))=g-3$ if $D$ is general enough. This implies that for a general $Q$, we have $h^0(C,\cO_C(D-B+Q))=1$.

\smallskip

Now, observe that the above effective degree $g$ divisors in
the fibers of (\ref{abeljaco}) span $(g-1)$-dimensional projective linear spaces. In fact, for any set of points $P_1,...,P_g$ lying on $C$ embedded by the linear system $|K+2D|$, we have $h^0(C,K+2D-P_1...-P_g)=2g-1$, hence the linear span of such
points in $\Pd$ is a $\PP^{g-1}$.

\smallskip

By applying Prop. \ref{span}, exactly in the same way as we
did in the proof of Prop. \ref{piani}, we see that all these linear
spans are sent to the $s$-class of $\OO(-B+Q)\oplus \OO(B-Q)$ via
$\varphi_D$ and that this is the whole fiber of $\OO(-B+Q)\oplus
\OO(B-Q)$.

\end{proof}

\section{A projection in $\su$ and its determinantal interpretation}\label{cuore}

As we have seen in the preceding section, the linear span of $\varphi_B(\Pb)$ is a linear subspace of $|2\Theta|$ of projective dimension $\left[\sum_{i=0}^{g-2}\left(\begin{array}{c}
g \\ i\\
\end{array}\right)\right]-1$, from now on it will be denoted by $\PP_c$. It is not difficult to see that any linear subspace complementary (i.e. disjoint and with maximal dimension) to $\PP_c$ in $|2\Theta|$ has projective dimension $g$.

\smallskip
Let $p_{\PP_c}$ be the linear projection in $|2\Theta|$ with center $\PP_c$. Its linear target space then is a $g$-dimensional projective space.

Before stating the next proposition we recall that if $E\in \su - \overline{\varphi_B(\Pb)}$ is stable then we have $h^0(C,E\otimes \cO_C(D))=2$ and we denote by $s_1$ and $s_2$ a basis of $H^0(C,E\otimes \cO_C(D))$.

\newcommand\Pg{\widetilde{\PP}^g}

\begin{theorem}\label{determinant}
There exists a g-dimensional linear target projective subspace of $|2\Theta|$,
which can be identified with the linear system $|2D|$ on the curve
$C$, such that the restriction to $\su - (\Kum(C)\cup
\overline{\varphi_B(\Pb)})$ of the projection $p_{\PP_c}$ coincides
with the following determinant map

\begin{eqnarray}
\su-(\Kum(C) \cup \overline{\varphi_B(\Pb)}) & \lra & |2D|,\\
E & \mapsto & Zeros(s_1 \wedge s_2).\label{formula}
\end{eqnarray}
\end{theorem}

\begin{proof}

The strategy of the proof is to translate, via the theta map, the
description of $p_{\PP_c}$ from the language of vector bundles to
that of theta divisors. First (Step 1), we show that $\pic^{g-1}(C)$ contains a canonical model $\tilde{C}$ of $C$ such that its linear span in $|2\Theta|^*$ corresponds to the complete linear system $|2D|^*$ on $C$. This implies that $|2\Theta_{\tilde{C}}|\cong|2\Theta|_{\tilde{C}}\cong |2D|$. Then (Step 2) we show that the linear span of $\tilde{C}$ is the annihilator of $\PP_c$. The projection $p_{\PP_c|\su}$ determines then a hyperplane in the annihilator of $\PP_c$, i.e. a point in $|2\Theta|_{\tilde{C}}=|2D|$. We can then identify (Step 3) the target of $p_{\PP_c|\su}$ and $|2D|$.
Finally (Step 4) by an easy Riemann-Roch argument we show that this map is actually the one defined by (\ref{formula}).


\medskip

\textbf{Step 1.} Let $\Kum '(C)$ be the quotient of $\pic^{g-1}(C)$ via the involution $L\mapsto K - L$, for $L\in \pic^{g-1}(C)$. The Kummer variety $\Kum '(C)$ is naturally contained in $|2\Theta|^*$. 





\newcommand{\suk}{\mathcal{SU}_C(2,K)}

Now let us recall that there exists a "dual" moduli space
$\mathcal{SU}_C(2,K)\subset |2\Theta|^*$ of
semi-stable rank 2 vector bundles with canonical determinant. The moduli space $\suk$ is isomorphic to $\su$ and contains $\Kum '(C)$ as the locus of split bundles $L\oplus K-L$, for $L\in \pic^{g-1}(C)$. By \cite{opp} (Section 1), $\suk$ can be ruled by $g$-dimensional projective spaces $\PP H^0(C,2K-2W)^*$ parametrizing vector bundles that are written uniquely as extensions

$$0\lra W \lra E \lra K-W \lra 0,$$

for $W\in \pic^{g-2}(C)$. In particular, if we take $W=B$ we see that $\suk$ contains a $g$-dimensional projective space, that can be identified with $|2D|^*$. The intersection of this subspace with $\Kum '(C)$ is exactly the image of the curve $C$, embedded by the linear system $|2D|$, and its points correspond to split vector bundles of type $\cO_C(B+p)\oplus \cO_C(D-p),\ p\in C$. Let us denote by $\tilde{C}\subset \pic^{g-1}(C)$ the curve given by line bundles $\cO_C(B+p)$, for $p\in C$. This description implies that $|2\Theta|$ cuts out on $\tilde{C}$ the complete linear system $|2D|$.

\textbf{Step 2.} Recall how the theta embedding is defined: we have

\begin{eqnarray*}\label{theta}
\theta: \su & \lra & |2\Theta|,\\
E & \mapsto & \theta(E)=\{L\in \pic^{g-1}(C): h^0(C,E\otimes L)\neq
0\}.
\end{eqnarray*}

Now, $p_{\PP_c}$ is the linear projection with center $\PP_c$. The
hyperplanes of $|2\Theta|$ containing $\PP_c$, i.e. containing all
$\varphi_B(\Pb)$, correspond to points of $|2\Theta|^*$ belonging to
$\Xi:=\bigcap_{E\in \overline{\varphi_B(\Pb)}}\theta(E)$. Then $\Xi$ is the
locus of line bundles $F\in \pic^{g-1}(C)$ such that $h^0(C,E\otimes
F)\neq 0$ for any $E\in \overline{\varphi_B(\Pb)}$. The
linear span of $\Xi$ in $|2\Theta|^*$ has projective dimension $g$ and it is
the annihilator of $\PP_c$. 

We claim that $\tilde{C}$ is contained in $\Xi$: for any $p\in C$, $\cO_C(B+p)\in \Xi$. In fact, if $E\in\overline{\varphi_B(\Pb)}$ then there exists an injective morphism $\cO_C(-B)\ra E$. Let us twist the morphism by $F\in \pic^{g-1}(C)$ and let us take cohomology. Thus we have an injection

$$H^0(C, F\otimes \cO_C(-B))\hookrightarrow H^0(C,F\otimes E)$$

\noindent and we see that if $F\equiv B+p$ for some $p\in C$, then
$h^0(C,F\otimes \cO_C(-B))\neq 0$. Hence $h^0(C, F\otimes E)\neq 0$
and $F\in \Xi$. We strongly suspect that $\tilde{C} = \Xi$ but we
are not able to show this. By Step 1, the linear span of $\tilde{C}$ in $|2\Theta|^*$ is $|2D|^*$ and has projective dimension $g$. Since $\tilde{C}\subset \Xi$, this means that $\tilde{C}$ and $\Xi$ have the same linear span in $|2\Theta|^*$. 

\smallskip

\textbf{Step 3.} Now let us describe our projection. For a stable $E\in \su \subset |2\Theta|$, the linear space $<E,p_{\PP_c}>$ is the
intersection of all the hyperplanes of $|2\Theta|$ containing
$p_{\PP_c}$ and $E$. By Step $2$ we have that these hyperplanes
correspond to points of $|2\Theta|^*$ belonging to the intersection
of $\theta(E)$ with $\Xi$. Hence, we can naturally describe
$p_{\PP_c}$ in terms of theta divisors as the map that sends $E\in
\su$ to the restriction $\theta(E)_{|\Xi}$. 

\begin{eqnarray*}
\su-(\Kum(C) \cup \PP_c) &  \lra & |2\Theta|_{|\Xi},\\
E & \mapsto & \Delta' (E).
\end{eqnarray*}

\noindent where we set

\begin{equation}\label{defdelta}
\Delta' (E)\eqdef\{L\in \Xi|h^0(C,E\otimes L)\neq 0\}.
\end{equation}

By Step $2$ we know that $\Xi$ has the same linear span as $\tilde{C}$ in
$|2\Theta|^*$. This means that the natural restriction map

$$|2\Theta|_{\Xi} \lra |2\Theta|_{\tilde{C}}\cong |2D|$$

\noindent is an isomorphism and $p_{\PP_c}$ can be alternatively defined as the map sending a vector bundle $E$ to the divisor $\Delta(E)$, restriction of $\theta(E)$ to $\tilde{C}$, such that

\begin{equation}\label{defdelta2}
\Delta(E) \eqdef \{p\in C|h^0(E\otimes \cO_C(B+p))\neq 0\}.
\end{equation}

Let us then consider the map $p_{\PP_c}$ in the following form

\begin{eqnarray}
\su-(\Kum(C) \cup \PP_c) &  \lra & |2D|\nonumber\\
E & \mapsto & \Delta(E).\nonumber
\end{eqnarray}

\textbf{Step 4.}  In order to show that our projection coincides with the
determinant map (\ref{formula}) we follow the lines of \cite{bol:kumwed} (Lemma
1.2.3) that we recall for convenience of the reader. Let $p\in C$,
if $p\in Zeroes(s_1\wedge s_2)$ then there exists $s_p\in
H^0(C,E\otimes \cO_C(D-p))$ and thus $h^0(C,E\otimes\cO_C(D-p))\neq
0$. Now via Riemann-Roch and Serre-Duality (recalling that $E\cong
E^*$ and $B\equiv K-D$) one gets
$h^0(C,E\otimes\cO_C(D-p))=h^0(C,E\otimes \cO_C (B+ p))$. We recall
that $\Xi$ contains all the points $\cO_C(B+P)\in \pic^{g-1}(C)$.
This implies that, when $s_1,s_2\in H^0(C,E(D))$, the divisor of
zeroes of $s_1\wedge s_2$ is $\Delta(E)$ and the two maps coincide.

\end{proof}

We warn the reader that we will often abuse notation by denoting $N$ both the point of $|2D|$ and the set of points of the divisor $N$ on $C$ itself.\\

Recall that there is a projective model of $C\subset \Pd$, embedded by $|K+2D|$. Let us consider now the linear subspace $<N> \subset \Pd$ generated by the $2g$ points of a divisor $N\in |2D|$. We remark that the annihilator vector space of $<N>$ is $H^0(C,K + 2D - N)$, which has linear dimension equal to $g$. This means that the linear span $<N>\subset\Pd$ is a $\PP^{2g-2}$, and we shall denote it by $\PP^{2g-2}_N$.

The following Lemma comes directly from \cite{ln85} Prop. 1.1.

\begin{lemma}\label{lanna2}
Let $N\in |2D|$ and let $e \in \Pd$ be an extension

$$0\lra \OO(-D)\stackrel{i_e}{\lra} E_e \stackrel{\pi_e}{\lra}\OO(D)\lra 0.$$

Then $e \in \PP^{2g-2}_N$ if and only if there exists a section\\
$\alpha \in H^0(C,Hom(\OO(-D),E))$ such that $Zeroes(\pi_e \circ
\alpha)=N.$
\end{lemma}


In the next lemma we go through the relation between the fibers of the projection
$p_{\PP_c}$  and the classifying map $\varphi_D$.

\begin{proposition}\label{fibre}

Let $N\in |2D|$ be a general divisor on $C\subset \Pd$ and $\PP^{2g-2}_N\subset \PP^{3g-2}_D$ the linear span of the points of $N$.
Then the image of

$$\varphi_{D|\PP^{2g-2}_N}:\PP^{2g-2}_N\dashrightarrow \su$$

\noindent is the closure in $\su$ of the fiber over $N\in |2D|$ of the projection $p_{\PP_c}$.

\end{proposition}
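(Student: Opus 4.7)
The plan is to prove the two inclusions between the image of $\varphi_{D|\PP^{2g-2}_N}$ and $\overline{p_{\PP_c}^{-1}(N)}$ separately, relying on Lemma~\ref{lannar} and on the determinantal description of $p_{\PP_c}$ from Theorem~\ref{determinant}. The key computational input comes from tensoring the extension $(e)$ with $\OO(D)$:
\[
0 \lra \OO \stackrel{\tilde{\imath}_e}{\lra} E_e(D) \stackrel{\tilde{\pi}_e}{\lra} \OO(2D) \lra 0.
\]
The image of $1 \in H^0(\OO)$ produces a distinguished \emph{tautological} section $s_0 \in H^0(E_e(D))$, and a local splitting computation shows that for every $s \in H^0(E_e(D))$ the wedge $s_0 \wedge s$, viewed as a section of $\det E_e(D) = \OO(2D)$, has zero divisor equal to that of $\tilde{\pi}_e(s) = \pi_e \circ s$.

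For the forward inclusion, I would pick $e \in \PP^{2g-2}_N$ generic enough that $E_e$ is stable and not in the image of $\varphi_B$, so that by the first Proposition of Section~\ref{excess} we have $h^0(E_e(D)) = 2$. Lemma~\ref{lannar} then supplies a section $\beta \in H^0(E_e(D))$ with $\mathrm{Zeros}(\pi_e \circ \beta) = N$, automatically independent of $s_0$ since $\pi_e \circ s_0 = 0$. The wedge identity above gives $\mathrm{Zeros}(s_0 \wedge \beta) = N$, and Theorem~\ref{determinant} translates this into $p_{\PP_c}(E_e) = N$, placing $E_e$ in the fiber over $N$.

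For the reverse inclusion, take $E$ stable with $p_{\PP_c}(E) = N$ and with $h^0(E(D)) = 2$. A generic $s \in H^0(E(D))$ is nowhere vanishing and thus defines a sub-bundle injection $\OO(-D) \hookrightarrow E$; since $\det E = \OO$, the quotient is $\OO(D)$, exhibiting $E = E_e$ for some $e \in \Pd$ (with $s$ playing the role of $s_0$). Choosing $s'$ completing $s$ to a basis of $H^0(E(D))$, the wedge identity combined with Theorem~\ref{determinant} gives $\mathrm{Zeros}(\pi_e \circ s') = \mathrm{Zeros}(s \wedge s') = N$, and Lemma~\ref{lannar} then places $e$ in $\PP^{2g-2}_N$.

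The main obstacle is the book-keeping around the loci where one of the two maps fails to behave generically --- namely the indeterminacy $\Sec^{g-1}(C)$ of $\varphi_D$, the exceptional fibres of Proposition~\ref{exkum}, and the base locus $Kum(C) \cup \PP_c$ of $p_{\PP_c}$. Since the statement concerns closures and both maps are regular on dense opens, running the argument above on the generic locus of $\PP^{2g-2}_N$ suffices; the dimension count $\dim \PP^{2g-2}_N - 1 = 2g-3 = \dim \su - \dim \Pg_D$ further confirms that the image fills the entire generic fiber rather than a proper subvariety.
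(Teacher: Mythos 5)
Your proof is correct and follows essentially the same route as the paper's: both arguments hinge on Lemma~\ref{lannar}, on the identification $\mathrm{Zeros}(i_e \wedge \alpha) = \mathrm{Zeros}(\pi_e \circ \alpha)$ (which the paper asserts and you verify by the local splitting computation), and on Theorem~\ref{determinant}, with the paper proving the forward inclusion and merely asserting the reverse one that you spell out. The single claim you leave unproved --- that a generic section of $E(D)$ is nowhere vanishing --- is a fair omission at the paper's level of detail, since sections can only vanish at points of $N$, and a point $p$ where every section vanishes would force $N \geq 2p$, so the claim holds whenever $N$ is reduced.
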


\begin{proof}


Let $e\in\PP^{2g-2}_N$ and let $E_e=\varphi_D(e)$ be
its image in $\su$. Then, by
Lemma \ref{lanna2}, the extension class $e$ belongs to
$\PP^{2g-2}_N$ if and only if there exists a section $\alpha \in
H^0(C,Hom(\cO_C(-D),E_e))$ such that, in the notation of Lemma
\ref{lanna2}, we have that $Zeroes(\pi_e \circ \alpha)=N$. This in
turn implies that $\alpha$ and $i_e$ are 2 independent sections of
$E_e\otimes \cO_C(D)$ and that $Zeroes(i_e\wedge\alpha)=N$. Hence
Theorem \ref{determinant} implies that $E_e$ is projected on $N\in
|2D|$ and that the image of
$\varphi_{D|\PP^{2g-2}_N}$ is contained in the closure of the fiber of $p_{\PP_c}$ over $N$. 

The same argument, in the opposite sense, implies that,
for any stable bundle $E$ not contained in $\PP_c$, the one dimensional fiber
 $\varphi_D^{-1}(E)$ is contained in $\PP^{2g-2}_{\Delta(E)}$, where
 $\Delta(E)\in |2N|$ is the divisor associated to a vector bundle $E$, as defined in Equation (\ref{defdelta2}). This completes the proof.

\end{proof}

In other words, let $<N,\PP_c>$ denote the linear span in $|2\Theta|^*$ of $\PP_c$ and the point of $|2D|$ corresponding to $N$. Then we have that

$$\su \cap <N,\PP_c> = \overline{\varphi_B({\Pb})} \cup \overline{\varphi_D(\PP^{2g-2}_N)}.$$

\section{Generic fibers, rational normal curves and pointed genus 0 curves}

Now let us consider the fiber of $\varphi_D$ over a general bundle.
General, for what matters to us, will mean belonging neither to
$\Kum(C)$ nor to $\overline{\varphi_B(\Pb)}$. Let us define

\begin{equation}\label{sezia}
\Sec^N\eqdef\Sec^{g-1}(C)\cap \PP^{2g-2}_N
\end{equation}

\noindent for any generic $N\in |2D|$. Moreover we shall denote by
$\Sec^n(N)$ the configuration of $(n-1)$-linear spaces spanned in
$\PP^{2g-2}_N$ by $n$-ples of points of $N$. It would be natural to
expect that $\Sec^N=\Sec^{g-1}(N)$ but this does not always hold. As
the next Lemma shows, this depends on the genus of the curve $C$. On the other hand the inclusion $\Sec^{g-1}(N) \subseteq \Sec^N$ is always true, since all the points of $N$ are contained in $\PP^{2g-2}_N$.

\begin{lemma}\label{baselocus}
i) The restriction map

$$\varphi_{D|\PP^{2g-2}_N}:\PP^{2g-2}_N\dashrightarrow \su$$

\noindent is given by a linear sub-system of $|\cI_{\Sec^N}(g)|$.\\

\smallskip

ii) If $g<4$ then $\Sec^N=\Sec^{g-1}(N)$, otherwise
$\Sec^{g-1}(N)\subsetneq \Sec^N$, i.e. $\Sec^{g-1}(N)$ is strictly contained in the base locus of $\varphi_{D|\PP_N^{2g-2}}$.

\end{lemma}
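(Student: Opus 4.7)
My plan splits along the two assertions of the lemma. For the first I would argue by restriction from Bertram's global description of $\varphi_D$. For the second, I would compute $\dim(\overline{B}\cap\PP^{2g-2}_N)$ for $(g-2)$-secant planes $\overline{B}$ via the Grassmann formula and then invoke Brill--Noether theory together with the non-hyperelliptic hypothesis to produce the dichotomy in $g$.

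For the first claim, $\varphi_D:\Pd\dashrightarrow\su$ is given by the full linear system $|\cI_C^{g-1}(g)|$, whose set-theoretic base locus coincides with the indeterminacy locus and is identified by Lemma \ref{linear} as $\Sec^{g-1}(C)$. Restricting to $\PP^{2g-2}_N$ and using $C\cap\PP^{2g-2}_N=N$ (established in the proof of Proposition \ref{fibre}), the restricted system becomes the system of degree $g$ forms on $\PP^{2g-2}_N$ vanishing with multiplicity $g-1$ along $N$; its base locus is therefore $\Sec^{g-1}(C)\cap\PP^{2g-2}_N=\Sec^N$, and $\varphi_{D|\PP^{2g-2}_N}$ is cut out by $|\cI_{\Sec^N}^{g-1}(g)|$.

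For the second claim, the inclusion $\Sec^{g-1}(N)\subseteq\Sec^N$ is immediate: any $(g-2)$-plane spanned by $g-1$ points of $N\subset C$ lies both in $\Sec^{g-1}(C)$ and in $\PP^{2g-2}_N$. For the reverse I would test each $(g-2)$-secant plane $\overline{B}$, with $B$ effective of degree $g-1$; if $B\subseteq N$ then $\overline{B}\subset \PP^{2g-2}_N$, contributing to $\Sec^{g-1}(N)$, while otherwise the Grassmann formula in the join $\overline{B+N}\subseteq \Pd$ together with the standard identity $\dim\overline{M}=h^0(K+2D)-h^0(K+2D-M)-1$ applied to $M=B,\,N,\,B+N$ gives
\[
\dim\bigl(\overline{B}\cap\PP^{2g-2}_N\bigr)=h^0(K-B)-2=h^0(B)-2,
\]
by Serre duality combined with Riemann--Roch on the degree $g-1$ divisor $B$. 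Thus an extra component of $\Sec^N$ appears precisely when $C$ admits a $g^1_{g-1}$: for $g=2$ the degree forces $h^0(B)\leq 1$; for $g=3$ a $g^1_2$ would make $C$ hyperelliptic, contrary to the standing hypothesis; whereas for $g\geq 4$ the Brill--Noether number $\rho(g,1,g-1)=g-4\geq 0$ guarantees $W^1_{g-1}(C)\neq\emptyset$. The delicate step I anticipate is checking, in the case $g\geq 4$, that the resulting extra points $\overline{B}\cap\PP^{2g-2}_N$ with $B\not\subset N$ truly lie outside $\Sec^{g-1}(N)$ for a generic $N\in|2D|$; I would verify this by an incidence-dimension count comparing the family of such points, as $B$ varies in $W^1_{g-1}(C)$, with the finite configuration $\Sec^{g-1}(N)$.
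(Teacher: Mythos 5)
Your proposal is correct and takes essentially the same route as the paper: the paper likewise restricts Bertram's system to $\PP^{2g-2}_N$, runs the same annihilator/Grassmann dimension count for the intersection of a general $(g-1)$-secant plane with $\PP^{2g-2}_N$ (phrased there via geometric Riemann--Roch as $\dim|L_{g-1}|\geq 1$, equivalent to your $h^0(B)\geq 2$), and concludes with the Brill--Noether existence theorem, with your treatment of $g=2,3$ being if anything more explicit. The ``delicate step'' you flag is glossed over by the paper as well, and it is settled directly by your own span formula rather than by an incidence count: choosing $B$ in the pencil disjoint from the finite set $N$, the same computation applied to the pair $B, N'$ gives $\overline{B}\cap\overline{N'}=\emptyset$ for every $(g-1)$-element subset $N'\subset N$, so the nonempty intersection $\overline{B}\cap\PP^{2g-2}_N$ automatically avoids $\Sec^{g-1}(N)$.
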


In the following proof we will occasionally go back to the notation $|K+2D|^*$ for $\Pd$
since it seems easier to handle while considering annihilators and linear spans.

\begin{proof}

i) By Lemma \ref{linear}, Lemma \ref{luoghi} and Equation (\ref{bertro}) this is straightforward.


\smallskip

ii) First note that $C\cap \PP^{2g-2}_N=N$. In fact, since for every
$c\in C$ we have $h^0(C,K+2D-N-c)=h^0(C,K - c)=g-1<g=h^0(C,K+2D-N)$,
there cannot be any further intersection. Hence $\Sec^{g-1}(N)\subseteq \Sec^N$. Now, we need to show that if $g\geq 4$ there exists some $(g-2)$-dimensional
$(g-1)$-secant plane of $C$ in $\Pd$ intersecting $\PP^{2g-2}_N$ out
of $\Sec^{g-1}(N)$. In order to do this it is enough to find an
effective divisor on $C$ of degree $g-1$, not contained in $N$, such that its linear span has non empty intersection with $\PP^{2g-2}_N$. Recall that
$\PP^{2g-2}_N\subset \Pd$ is the annihilator of $|K+2D-N|=|K|\subset
|K+2D|$; hence there are $g$ sections of $H^0(C,K+2D)$, seen as
hyperplanes on $|K+2D|^*$, that vanish on $\PP^{2g-2}_N\subset
|K+2D|^*$. On the other hand, given any effective degree $(g-1)$
divisor $L_{g-1}$, via Riemann-Roch Theorem one sees that the
annihilator of the linear span $<L_{g-1}>\subset |K+2D|$ is given by
$2g$ sections of $H^0(C,K+2D)$ (i.e. $2g$ hyperplanes in
$|K+2D|^*$). Hence, since $h^0(C,K+2D)=3g-1$ we see that
$\PP^{2g-2}_N$ has non-empty intersection with $<L_{g-1}>$ if and
only if $\dim(H^0(C,K)\cap H^0(C,K+2D-L_{g-1}))\geq 2$ (note that
this means exactly the condition we want to check, i.e.
$\Sec^{g-1}(N)\subsetneq \Sec^N$). This in turn means that in
$\PP^{g-1}=|K|^*$ there exists a linear subspace of codimension at
least 2 that contains the points of $L_{g-1}$, i.e.
$h^0(C,K-L_{g-1})=h^1(C,L_{g-1})\geq 2$. By the geometric form of
Riemann-Roch Theorem, this is equivalent to

$$\dim(|L_{g-1}|)\geq g-1-1-(g-3)=1.$$

Hence, we have $\Sec^{g-1}(N)\subsetneq \Sec^N$ as long as $\dim(|L_{g-1}|)\geq 1$.
Finally, by the Existence Theorem of Brill-Noether theory (Thm. 1.1, page 206, \cite{acgh:gac})
we see that this is the case if and only if $g\geq 4$ (remember that we assume that $C$ is non-hyperelliptic). Furthermore the dimension of the variety $G^r_{g-1}$ of the linear series $g^r_{g-1}$ for $C$ is $g-(r+1)^2$, so for $r=1$ it has dimension $g-4$.
\end{proof}

We also need a slight generalization of a classical lemma. We will call \textit{degenerate} or \textit{singular} RNC a connected curve of degree $d$ and genus 0 in $\PP^d$ that is the union of smooth RNC of degree smaller than $d$.

\begin{lemma}\label{castelnuovo}
Let $p_1,\dots, p_{n+2}$ a set of linearly general points in $\PP^n$. Given any further point $q\in \PP^n$, there exists at least one (possibly singular) RNC through the $(n+3)$ points $p_1,\dots,p_{n+2},q$.
\end{lemma}

\begin{proof}
If the set of $(n+2)$ points $p_1,\dots,p_{n+2},q$ is general, then the result is classical (see \cite{har:ag}, Thm. 1.18). Suppose instead that $q$ lies in some proper linear subspace $\PP^m \subset \PP^n$, $m<n$, spanned by a subset of the $p_i$s. Up to relabeling we can assume $\hat{\PP}^m:=\langle p_1,\dots,p_{m+1} \rangle$. The case $q=p_i$ for some $i$ is clear. By possibly taking a smaller linear span we can assume that $q$ is in general position with respect to $p_1,\dots,p_{m+1}$. Let us denote by $t$ the intersection point of $\hat{\PP}^m$ with $\hat{\PP}^{n-m}:=\langle p_{m+2},\dots,p_{n+2}\rangle$. Then by the hypotheses of generality on the $p_i$s, the $(m+3)$ points $p_1,\dots,p_{m+1},q,t$ are in general position inside $\PP^m$, hence by the classical case there exists a unique RNC $C_m$ of degree $m$ passing through $p_1,\dots,p_{m+1},q,t$. On the other hand, the $(n-m+2)$ points $p_{m+2},\dots,p_{n+2},t$ in $\hat{\PP}^{n-m}$ are in general position as well, hence by the classical case there exists at least one  RNC $C_{n-m}$ of degree $(n-m)$ passing through them. The nodal reducible RNC that we want to display is just the union $C_m\cup C_{n-m}$.
\end{proof}

Lemma \ref{baselocus} has some unpleasant consequences for the
description of the fibers of $p_{\PP_c}$ for $g>3$ (see Remark
\ref{negative}). In the following Proposition, we will consider the
case in which $N\in |2D|$ is the divisor $\Delta(E)$ associated to a
vector bundle $E$, as defined in Equation
(\ref{defdelta2}). The points of $\Delta(E)$ are in general position in $\PP_{\Delta(E)}^{2g-2}$.


Let us denote by $C_E$ the projective closure of the fiber
$\varphi^{-1}_D(E)$. In the following we will often write RNC instead of
Rational Normal Curve. Recall that in $\PP^m$ there exists a unique RNC passing through $m+3$ points in general position.

\begin{proposition}\label{rnc}
Let $E$ be a general vector bundle in $\su -(\Kum(C) \cup \overline{\varphi_B(\Pb)})$. Then $C_E\subset \PP^{2g-2}_{\Delta(E)}$ is a rational normal curve of degree $2g-2$ passing through the $2g$ points of $\Delta(E)\subset \PP^{2g-2}_{\Delta(E)}$.
\end{proposition}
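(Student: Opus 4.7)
The plan is to parametrize $C_E$ via $\PP H^0(C, E(D)) \cong \PP^{1}$, identify the resulting map in Serre-dual coordinates on $\Pd = \PP H^0(C, K+2D)^*$ as a matrix-pencil kernel, and then read off both the degree of $C_E$ and its passage through $\Delta(E)$ from this description.

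First I would set up the parametrization. For a general $E$ outside $Kum(C) \cup \varphi_B(\Pb)$, Riemann--Roch and stability give $h^0(C, E(D)) = 2$, and a nowhere vanishing section $s \in H^0(E(D))$ is precisely a sub-bundle inclusion $\OO(-D) \hookrightarrow E$. Twisting the extension defining $E$ by $K+D$ produces the sequence $0 \to K \xrightarrow{s} E(K+D) \xrightarrow{\mu_s} K+2D \to 0$, where $\mu_s(v) = s \wedge v$ uses $\det E \cong \OO$. Via Serre duality the extension class $[\psi(s)] \in \PP H^1(\OO(-2D)) = \PP H^0(K+2D)^* = \Pd$ coincides with the hyperplane $\im(\mu_s) \subset H^0(K+2D)$, so the rational map $\psi \colon \PP H^0(E(D)) \dashrightarrow \Pd$ is read off from the bilinear pairing $(s,v) \mapsto s \wedge v$.

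Next I would verify $C_E \subset \PP^{2g-2}_{\Delta(E)}$ and $\Delta(E) \subset C_E$. The annihilator of $\PP^{2g-2}_{\Delta(E)}$ in $H^0(K+2D)$ is the subspace of sections vanishing on $\Delta(E) = Z(s_1 \wedge s_2)$, each of the form $g \cdot (s_1 \wedge s_2)$ with $g \in H^0(K)$. For $s = a s_1 + b s_2$ with $a \ne 0$, the choice $v = (g/a)\, s_2 \in H^0(E(K+D))$ gives $\mu_s(v) = g \cdot (s_1 \wedge s_2)$; hence this subspace sits inside $\im(\mu_s)$ for every $s$, which forces $[\psi(s)] \in \PP^{2g-2}_{\Delta(E)}$. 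Conversely, for each $p \in \Delta(E)$ there is a unique $[s_p] \in \PP H^0(E(D))$ with $s_p(p) = 0$; then $\im(\mu_{s_p}) \subset H^0(K+2D-p)$, and the inclusion is an equality because $\ker(\mu_{s_p})$ still has dimension $g$ (sections of $H^0(K+p)$ pushed into $E(K+D)$ via the saturated inclusion $\OO(p) \hookrightarrow E(D)$ of $s_p$). So $[\psi(s_p)]$ is the hyperplane $H^0(K+2D-p)$, which is exactly the point $p \in C \subset \Pd$.

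The main obstacle is the degree computation. Quotienting out the fixed copy of $H^0(K)$ just discussed, one obtains a pencil of $(2g-1) \times (4g-2)$ matrices $\bar{\mu}_s = aB_1 + bB_2$ of generic rank $2g-2$ whose $1$-dimensional left kernel represents $[\psi(s)] \in \PP^{2g-2}_{\Delta(E)}$; this kernel is presented by a vector of $(2g-2) \times (2g-2)$ minors, each a homogeneous polynomial of degree $2g-2$ in $(a,b)$, so $\deg(C_E) \le 2g-2$. On the other hand the $2g$ points of $\Delta(E)$ already span $\PP^{2g-2}_{\Delta(E)}$, so $C_E$ is non-degenerate and has degree at least $2g-2$. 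Together with the birationality of $\psi$ onto $C_E$---distinct sub-bundles $\OO(-D) \subset E$ yield distinct extension classes since $\aut(E) = \CC^*$ for $E$ stable---this forces $\deg(C_E) = 2g-2$, and a non-degenerate rational curve of degree $2g-2$ in $\PP^{2g-2}$ is a rational normal curve.
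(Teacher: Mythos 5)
Your proof is correct, and it takes a genuinely different route from the paper's. The paper also begins by placing $C_E$ inside $\PP^{2g-2}_{\Delta(E)}$, but its engine is the numerical coincidence $(2g-2)g=(g-1)\cdot 2g$: since the restriction of $\varphi_D$ is given by degree-$g$ forms vanishing to order $g-1$ at the $2g$ points of $\Delta(E)$ (Bertram's description of the linear system), Bezout forces each such form either to vanish identically on a rational normal curve through those points or to meet it only in $\Delta(E)$; hence $\varphi_D$ contracts every RNC through $\Delta(E)$ off its base locus, $C_E$ is a finite union of such RNCs, and the birational map $h\colon\varphi_D^{-1}(E)\to\PP H^0(C,E(D))=\PP^1$ forces it to be a single one. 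You instead make the inverse of $h$ explicit---Serre duality identifies the extension class attached to $s\in H^0(C,E(D))$ with the hyperplane $\im H^0(\mu_s)\subset H^0(C,K+2D)$---and extract everything from that: the containment, the incidence with $\Delta(E)$, and the bound $\deg C_E\le 2g-2$ from the $(2g-2)\times(2g-2)$ minors of the pencil $\bar{\mu}_s$, with non-degeneracy (the $2g$ points of $\Delta(E)$ span $\PP^{2g-2}_{\Delta(E)}$) giving equality and the minimal-degree characterization of rational normal curves concluding. Your route is more self-contained: it needs neither Bertram's theorem, nor Bezout, nor the base-locus analysis of Lemma \ref{baselocus}, and it produces the degree and the passage through $\Delta(E)$ by direct computation. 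Two remarks. First, you implicitly assume that each $s_p$ vanishes exactly at $p$---equivalently, that $\bar{\mu}_s$ has corank one for every $s\neq 0$; this is precisely where generality of $E$ enters and deserves a line: bundles admitting a subsheaf $\OO(-D+p+q)$ arise from a family of extensions of dimension at most $2+(3g-6)=3g-4<3g-3$, so a general $E$ avoids them. Second, the paper's Bezout argument proves strictly more than the statement, namely that \emph{all} RNCs through $\Delta(E)$ off the base locus are contracted by $\varphi_D$, and it is this stronger fact that Theorem \ref{functor} later uses to identify the fibers of $p_{\PP_c}$ birationally with $\cM_{0,2g}$; your argument establishes the proposition as stated, but could not be substituted for the paper's proof without supplementing that later step.
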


\begin{remark}\label{quantogeneral}
One can be slightly more precise about the hypotheses on $E$. By "general'' inside $\su -(\Kum(C) \cup \overline{\varphi_B(\Pb)})$, we mean first of all that $\Delta(E)$ is reduced. Moreover, we also exclude the vector bundles $E$ such that there exists a divisor $G \subset \Delta(E)$, with $deg(G)<2g-1$ and
an extension class $(f)\in \varphi_D^{-1}(E)$ that has a sheaf injection $\OO(D - G)\hookrightarrow E$ vanishing on $G$. The fibers over the stable bundles for which such a divisor $G$ exists are lower degree RNCs contained in linear subspaces (in fact spanned by $G$) of $\PP^{2g-2}_{\Delta(E)}$.
(see Prop. \ref{span}). 
\end{remark}

\begin{proof}
The strategy of the proof is the following. First we describe the base locus of $\varphi_{D|\PP^{2g-2}_{\Delta(E)}}$, then we show that the generic fiber $C_E$ must be a finite set of RNCs and finally, by a birationality argument, we show that it must consist of only one RNC.\\
\smallskip

We recall that $\su -(\Kum(C) \cup \overline{\varphi_B(\Pb)})$ is the locus where $C_E$ has dimension one. By Proposition \ref{fibre} we know that $C_E\subset\PP^{2g-2}_{\Delta(E)}$. Moreover, we have the trivial numerical equality

\begin{equation}\label{coincidenza}
(2g-2)g=(g-1)2g
\end{equation}

\noindent that has some useful implications. We know from Lemma \ref{baselocus} (ii) that the base locus of $\varphi_{D|\PP_{\Delta(E)}^{2g-2}}$ contains strictly $\Sec^{g-1}(\Delta(E))$ if $g>3$. We claim that in this case the further base locus of $\varphi_{D|\PP_{\Delta(E)}^{2g-2}}$ is set-theoretically a family of rational normal curves passing through the $2g$ points of $\Delta(E)$. In fact, recall that $\varphi_D$ is defined by forms of degree $g$ vanishing with multiplicity $(g-1)$ at the points of $C$. Hence, by Bezout Theorem, Lemma \ref{baselocus} and Equation (\ref{coincidenza}) if all forms that define $\varphi_{D|\PP^{2g-2}_{\Delta(E)}}$ vanish at least on one further point $p$ with respect to $Sec^{g-1}(\Delta(E))$, then they are forced to vanish on the possibly degenerate RNCs through $\Delta(E)$ and $p$ (see Lemma \ref{castelnuovo}), otherwise Bezout Theorem would be contradicted. If $p$ is general there will be just one RNC, otherwise there may be more (Lemma \ref{castelnuovo}).
Note however that $\Sec^{\Delta(E)}$ (with the notation of equality \ref{sezia}) has codimension at least two in $\PP^{2g-2}_{\Delta(E)}$ since $\dim(\Sec^{g-1}(C))=2g-3$ and $\Sec^{g-1}(C)$ cannot be a proper subset of $\PP^{2g-2}_{\Delta(E)}$. Since the base locus for $\varphi_{D|\PP^{2g-2}_{\Delta(E)}}$ is exactly $\Sec^{\Delta(E)}$ and $\PP^{2g-2}_{\Delta(E)}$ is ruled by RNCs through $\Delta(E)$ (see Lemma \ref{castelnuovo}), the generic RNC through $\Delta(E)$ is not contained in the base locus.

Furthermore the equality (\ref{coincidenza}) implies that the classifying map $\varphi_D$ is constant along the RNCs (not contained in its base locus) passing through the $2g$ points of $\Delta(E)$. In fact by Lemma \ref{baselocus} the restriction $\varphi_{D|\PP^{2g-2}_{\Delta(E)}}$ is given by a linear subsystem of  $|\cI_{\Delta(E)}^{g-1}(g)|$ (if $g=2,3$ it is given by the full linear system) thus the zero loci of the forms of this linear system can not have intersection bigger than $\Delta(E)$ with our RNCs.
This means that $C_E$ is a finite collection of RNCs passing through $\Delta(E)$. Note moreover that $\Delta(E)$ is the only intersection of $C_E$ with the base locus of $\varphi_{D|\PP^{2g-2}_{\Delta(E)}}$. As we have already stated, if $C_E$ has  further intersection with the base locus, then it is contained in the base locus itself.

\smallskip

In order to prove that $C_E$ is in fact a unique curve, let us recall that every point $e\in \varphi_D^{-1}(E)$ represents an exact sequence like the following

\begin{equation}\label{seq1}
0\ra \cO_C(-D) \ra E \ra \cO_C(D) \ra 0.
\end{equation}

\noindent Thus we can define a map

\begin{equation}\label{acca}
h:C_E \ra \PP H^0(C,E\otimes \cO_C(D)))=\PP^1
\end{equation}

\noindent that sends the extension class $e\in \varphi_D^{-1}(E)$ on the point $h(e)\in \PP H^0(C,E\otimes \cO_C (D))$ corresponding to the first morphism of the exact sequence (\ref{seq1}). The map $h$ is birational since on the open set complementary to $\Delta(E)$ it is one-to-one. This in turn implies that $C_E$ must be just one irreducible RNC. In fact, the arithmetic genus is forced to be 0. Note that, \textit{a priori}, the map $h$ from Equation (\ref{acca}) is not defined on $\Delta(E)=C_E\cap C$ but one can complete it by sending any $p\in \Delta(E)$ to the only section $s_p$ of $E\otimes \cO_C(D)$ that vanishes on $p$ (see Proof of Thm. \ref{determinant} for the definition of $s_p$).


\end{proof}



The linear systems contracting all RNC passing through a set of fixed points have been explored in a detailed way in \cite{bolgit}.
The space of rational normal curves in $\PP^{m-2}$ passing through $m$ points in general position is closely related to the moduli space $\cM_{0,m}$ of configurations of ordered distinct $m$ points on $\PP^1$, as the following Theorem shows. By $\cH$ we denote the Hilbert scheme classifying flat closed finitely presented subschemes of $\PP^{m-2}$. It is a disjoint union indexed by the various Hilbert polynomials.

\begin{theorem}\label{verokapra}(\cite{kapra}, Thm. 0.1))
Take m points $q_1,\dots,q_m$ in general position in the projective space $\PP^{m-2}$. Let $V_0(q_1,\dots,q_m)$ be the space of all rational normal curves in $\PP^{m-2}$ through the points $q_i$. Considering it as a subvariety of the Hilbert scheme $\cH$ parametrizing all subschemes of $\PP^{m-2}$, we have $V_0(q_1,\dots,q_m)\cong \cM_{0,m}$.
\end{theorem}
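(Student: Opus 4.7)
The plan is to build mutually inverse morphisms between $V_0(q_1,\ldots,q_n)$ and $\cM_{0,n}$. As a sanity check, both sides have dimension $n-3$: the variety of rational normal curves (RNCs) in $\PP^{n-2}$ has dimension $(n-1)^2-1-3 = n^2-2n-3$, obtained as $\PGL_{n-1}$ modulo the $\PGL_2$-stabilizer of a fixed RNC, and the classical fact that $n+1$ general points determine a unique RNC forces each general point to impose $n-3$ conditions; so RNCs through $n$ general points have expected dimension $n^2-2n-3-n(n-3) = n-3 = \dim \cM_{0,n}$.

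The forward map $\Psi: V_0(q_1,\ldots,q_n) \to \cM_{0,n}$ is essentially tautological. An RNC $\gamma \subset \PP^{n-2}$ passing through the $q_i$ is abstractly isomorphic to $\PP^1$, and since a hyperplane cuts $\gamma$ in only $n-2$ points, the $q_i$ pull back to $n$ distinct points of $\gamma \simeq \PP^1$, yielding a well-defined class in $\cM_{0,n}$ after quotienting by the $\PGL_2$-ambiguity of the identification $\gamma\simeq\PP^1$.

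For the inverse $\Phi: \cM_{0,n} \to V_0(q_1,\ldots,q_n)$, I would take representatives $(t_1,\ldots,t_n) \in (\PP^1)^n$ of a class and apply the degree-$(n-2)$ Veronese embedding $\nu: \PP^1 \hookrightarrow \PP^{n-2}$, producing the $n$ points $\nu(t_i)$, which are in linearly general position on the standard RNC $\nu(\PP^1)$. Because $n$ points in general position in $\PP^{n-2}$ form a projective frame, there exists a unique $g\in\PGL_{n-1}$ with $g\cdot\nu(t_i) = q_i$, and I set $\Phi([t_i]) := g\cdot\nu(\PP^1)$. A different choice of representatives differs by some $\sigma \in \PGL_2$ acting on $\PP^1$; via the Veronese this $\sigma$ extends to an element of $\PGL_{n-1}$ that stabilizes $\nu(\PP^1)$ and is absorbed into $g$, so $\Phi$ is well defined. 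That $\Psi\circ\Phi$ and $\Phi\circ\Psi$ are the identity is then a direct check.

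The hard part will be upgrading this bijection of points to a scheme-theoretic isomorphism. I would work in families: globalize the Veronese construction over a base $T$ and use the universal statement that two $T$-families of $n$ sections of $\PP^{n-2}$ in general position are related by a unique $T$-point of $\PGL_{n-1}$ (i.e.\ projective frames are universally rigid). This produces a universal RNC through the $q_i$ over $\cM_{0,n}$ and, conversely, a universal $n$-tuple of sections on the tautological RNC over $V_0(q_1,\ldots,q_n) \subset \cH$, giving morphisms in both directions that are inverse to each other. The main technical point is checking that the ``general position'' hypothesis on the $q_i$ is preserved under the constructions, which is a standard openness check on the parameter spaces.
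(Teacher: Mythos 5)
This statement is quoted by the paper from Kapranov (\cite{kapra}, Thm.\ 0.1) and no proof of it is given anywhere in the paper, so there is no internal argument to compare yours against; I can only assess your proposal on its own terms, and it is essentially the standard (Kapranov-style) argument and is correct. Your bijection is sound: the forward map is tautological (an RNC is embedded, so the $q_i$ pull back to $n$ distinct points of $\PP^1$ up to $\PGL_2$), and the inverse correctly exploits the two classical facts that $n$ distinct points on a rational normal curve in $\PP^{n-2}$ form a projective frame and that a frame is moved to another frame by a unique element of $\PGL_{n-1}$, with the $\PGL_2$-ambiguity absorbed because automorphisms of $\PP^1$ extend through the Veronese to projective automorphisms stabilizing $\nu(\PP^1)$; note that checking $\Phi\circ\Psi=\mathrm{id}$ also uses linear normality of the RNC, i.e.\ that any abstract isomorphism $\PP^1\to\gamma$ followed by the inclusion is the Veronese composed with a projective transformation. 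The only part left as a sketch is the scheme-theoretic upgrade, but your plan is the right one: $\cM_{0,n}$ is a fine moduli space for $n\geq 3$ (a fact the paper itself records), the restriction to $V_0$ of the universal family over $\cH$ together with the constant sections $q_i$ classifies a map $V_0\to\cM_{0,n}$, the relative Veronese of the universal pointed curve normalized by the unique relative frame-matching element of $\PGL_{n-1}$ gives a flat family of RNCs through the $q_i$ and hence a map $\cM_{0,n}\to V_0$, and mutual inverseness follows from uniqueness of classifying maps rather than a pointwise check (which would require knowing $V_0$ reduced).
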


Moreover Kapranov showed that, if we take the closure

$$V(q_1,\dots,q_m):=\overline{V_0(q_1,\dots,q_m)}$$

\noindent of $V_0(q_1,\dots,q_m)$ in $\cH$ then we get $\overline{\cM}_{0,m}$, i.e. the compactification of $\cM_{0,m}$ obtained by adding stable curves.

\begin{remark}\label{zeito}
When $g(C)=2$ and $D=K$ Proposition \ref{rnc} coincides with the description of the conic bundle given in \cite{bol:kumwed}. In this case the closure of the fiber of the classifying map over a stable bundle $E$ is a plane conic in $\PP^2_{\Delta(E)}$ passing through the four points of $\Delta(E)\in|2K|$. The map $p_{\PP_c}$ is the projection on $|2K|=\PP^2$ with center the node $[\cO_C\oplus \cO_C]$ of the Kummer surface $\Kum(C)\subset |2\Theta|$ and the fiber of $p_{\PP_c}$ over a divisor $\Delta(E)\in |2K|$ is a $\PP^1$, that corresponds to the pencil of conics in $\PP^{2}_{\Delta(E)}$ passing through the four points. This pencil can be seen as the base example of Theorem \ref{verokapra}. Plane conics passing through four fixed points in general position are in fact in bijection with configurations of four points on the projective line and $\PP^1$ is in fact the GIT compactification of $\cM_{0,4}$ (and, by the way, also the Mumford-Knudsen one, $\overline{\cM}_{0,4}$). The semistable configurations correspond to the rank 2 reducible conics and to the points of intersection of the projective line with $\Kum(C)$.
\end{remark}

Proposition \ref{rnc} and Theorem \ref{verokapra} then suggest us a modular interpretation of the fibers of the projection $p_{\PP_c}$. In fact we have just showed that, if $N\in |2D|$, then the restriction

$$\varphi_{D|\PP^{2g-2}_N}:\PP^{2g-2}_N\dashrightarrow \su$$

\noindent contracts every RNC passing through the $2g$ points of N, when the RNC is not contained in the base locus. In particular, if $g=2,3$ then $\varphi_{D|\PP^{2g-2}_N}$ contracts every RNC passing through $N$. Then, via Kapranov's isomorphism, the fibers of the map $\varphi_D$ make up a family of $2g$-pointed rational curves. The last two sections of the paper will be devoted to describing the fibers of $p_{\PP_c}$ via the families of RNCs contained in $\Pd$.

\section{The genus 3 case: a fibration in Segre cubics.}

Let us now go through the details of the genus 3 case, assuming that $C$ is not hyperelliptic. The general genus $g$ case will be developed in the next Section. As already stated, in this case $\su$ is embedded in $\PP^7=|2\Theta|$ as a quartic hypersurface singular along  $\Kum(C)$, first discovered by Coble \cite{cob:agtf}.

In this case $deg(D)=3$ and $\varphi_B$ is a linear embedding of $\PP^3_B$ in $\PP^7$ (see \cite{paulydual} Sec. 2.3). The image of the projection from $\PP_c=\PP^3_B$ is a $\PP^3$ as well, that is identified with $|2D|$ by Theorem \ref{determinant}. On the other hand the extension classes belonging to $\ext^1(\cO_C(D),\cO_C(-D))$ are parametrized by a $\PP^7_D$ that contains a model of $C$ and the classifying map $\varphi_D$ is given by the complete linear system $|\cI_C^2(3)|$.


\begin{remark}
The choice of a projective model of $C$ in this case allows to do explicit calculations on this map, since they are still fairly simple and can be performed in a reasonable time by a computer. By computing the image of this map with \textit{Macaulay}, we found some equations of Coble quartics in terms of the coefficients of a plane quartic model of $C$. The same results, with methods coming from the context of integrable systems, were obtained by P. Vanhaecke in \cite{pol}.
\end{remark}

Let us now take a generic divisor $N\in |2D|$ and denote by $S_N$ the closure of its fiber $p_{\PP_c}^{-1}(N)$. For simplicity we will assume that $N$ is reduced: all points are distinct. By Proposition \ref{fibre} $S_N$ is the image via $\varphi_D$ of the $\PP^4_N$ spanned by the six points of $N$.

\begin{proposition}\label{segre}
The 3-fold $S_N$ is a Segre cubic.
\end{proposition}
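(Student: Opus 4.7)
The plan is to realize $S_Q$ as the image of an explicit rational map $\PP^{4}_Q\dashrightarrow\PP^{4}$ defined by cubics with prescribed double base points, and then to identify this image via the classical realization of the Segre cubic as a compactification of $\cM_{0,6}$.

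First, by Proposition~\ref{fibre}, $S_Q$ is the closure of the image of $\varphi_{D|\PP^{4}_Q}\colon \PP^{4}_Q \dashrightarrow \su$, where $\PP^{4}_Q = \PP^{2g-2}_Q$ is the linear span of the six points of $Q$. Specializing Lemma~\ref{baselocus} to $g=3$ gives $\Sec^Q = \Sec^{2}(Q)$, so this restriction is given by the \emph{complete} linear system of cubic forms on $\PP^{4}_Q$ vanishing with multiplicity at least two at each of the six points of $Q$; its target $\PP^{4}$ coincides with the fiber of $p_{\PP_c}$ over $Q$, i.e.\ the join of $\PP_c\cong\PP^{3}$ with the point $Q\in\Pg_D=|2D|$. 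One then checks that the six points of $Q$ lie in linearly general position in $\PP^{4}_Q$ by a Riemann--Roch computation on $C\hookrightarrow|K+2D|^*$: for any subdivisor $Q'\le Q$ of degree $k\le 5$ the bundle $2D-Q'$ has negative degree, so $h^1(K+2D-Q')=0$ and $h^0(K+2D-Q') = (3g-1) - k$, which shows that any $k$ of the six points impose independent conditions on $|K+2D|$ and span a $\PP^{k-1}\subset\PP^{4}_Q$.

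Finally, I would invoke the classical realization of the Segre cubic. The linear system of cubic hypersurfaces in $\PP^{4}$ with double points at six points in linearly general position has projective dimension $4$, and its associated rational map $\PP^{4}\dashrightarrow\PP^{4}$ has generic one-dimensional fibers given by the rational normal quartics through the six base points (consistent with Proposition~\ref{rnc}, and, by Theorem~\ref{verokapra}, forming a $3$-dimensional family isomorphic to $\cM_{0,6}$). The image is therefore a GIT compactification of $\cM_{0,6}$ sitting inside $\PP^{4}$, and by the classical identification (see Dolgachev--Ortland, or Hunt) this compactification is the unique ten-nodal cubic threefold in $\PP^{4}$, namely the Segre cubic. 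The main obstacle I expect is this last projective identification: one must match the explicit linear system arising from six double base points with the standard $S_{6}$-symmetric presentation of the Segre cubic (cut out in $\PP^{5}$ by $\sum x_i = \sum x_i^{3}=0$), which is classical but requires some care. The Riemann--Roch general-position step is routine but essential, since without it the classical formulas do not apply.
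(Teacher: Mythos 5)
Your setup is sound and matches the paper's: by Proposition~\ref{fibre} and Lemma~\ref{baselocus} (complete for $g=3$), $S_Q$ is the image of $\PP^4_Q$ under the full system of cubics with double points at the six points of $Q$, and your Riemann--Roch verification of linearly general position is correct and genuinely useful, since six such points form a projective frame in $\PP^4$ and the configuration is therefore projectively rigid. The gap is in the last step, which is precisely the content of the proposition. From the fact that the generic fibres are the rational normal quartics through the six points you can conclude only that the image is \emph{birational} to $\cM_{0,6}$; but $\cM_{0,6}$ is a rational variety, so this says nothing more than that $S_Q$ is a rational threefold. The jump ``the image is therefore a GIT compactification of $\cM_{0,6}$'' is a non sequitur: a variety containing (an open set isomorphic to) $\cM_{0,6}$ densely is just \emph{some} compactification, and there are many mutually non-isomorphic ones ($\overline{\cM}_{0,6}$, GIT quotients for various linearizations, etc.); singling out $\cM_{0,6}^{GIT}\cong S_3$ \emph{with its projective structure in} $\PP^4$ requires identifying the polarization induced by your linear system, which is exactly the ``classical identification'' you defer and flag as the main obstacle. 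Nowhere do you establish the two facts that actually characterize the Segre cubic among threefolds in $\PP^4$: that the image has degree $3$, and that it has ten nodes. The statement you would need to cite (six general points in $\PP^4$, cubics doubled at them, image the Segre cubic) is essentially Bronowski's, which the paper mentions only as a historical aside; invoking it without proof or a precise reference leaves the crux unproven.

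The paper closes exactly this gap with two concrete computations that your argument has no substitute for. First, the degree: $\su\subset\PP^7$ is the Coble quartic, and the $\PP^4$ spanned by $\PP_c=\PP^3_B$ and the point of $|2D|$ corresponding to $Q$ meets $\su$ in $S_Q\cup\PP^3_B$; since $\PP^3_B$ is a linear component, $\deg S_Q=4-1=3$. Second, the nodes: by Remark~\ref{piani} the $\binom{6}{3}/2=10$ pairs of $2$-planes spanned by complementary triples of points of $Q$ are contracted, each pair to a point of $S_Q\cap Kum(C)=S_Q\cap Sing(\su)$, giving ten double points. The Varchenko bound (ten is the maximum for a cubic threefold) together with the classical uniqueness of the ten-nodal cubic then identifies $S_Q$ as the Segre cubic. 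If you want to keep your more intrinsic route and avoid the Coble quartic, your rigidity observation is the right lever: it reduces everything to the standard frame of six points in $\PP^4$, for which the Bronowski-type statement could be proved by direct computation of the linear system; but that computation (or the paper's degree and node counts) must actually be carried out --- as written, the identification is assumed rather than proved.
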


The Segre cubic $S_3$ is a classical modular threefold (see for instance \cite{do:pstf}). In $\PP^5$ with homogeneous coordinates $[x_0 :\dots: x_5]$ we consider the complete intersection

$$S_3 := \bigl\{ \sum_{i=0}^5 x_i =0;
\sum_{i=0}^5 x^3_i= 0 \bigr\}.$$

The first equation is linear, so $S_3$ is a hypersurface in the $\PP^4:= \{ x\in \PP^5| \sum x_i = 0 \}$.
Using $[x_0 :\dots: x_5]$ as projective coordinates, the relation $x_5=-x_0 \dots -x_4$ gives the equation of $S_3$ as a hypersurface but the equation in $\PP^5$ has the advantage of showing that $S_3$ is invariant under the symmetry group $\Sigma_6$, acting on
$\PP^5$ by permuting coordinates, which is not immediate from the hypersurface equation. $S_3$ is the GIT compactification of the moduli space $\cM_{0,6}$ of ordered configurations of 6 points on $\PP^1$. By considering these points as Weierstrass points of a genus 2 curve, one can also see $S_3$ as a birational model of the Satake compactification of $\mathcal{A}_2(2)$, the moduli space of principally polarized abelian surfaces with a level 2 structure. In fact $S_3$ is the dual variety of the Igusa quartic (for an account see \cite{do:pstf} or \cite{hu:gsq}), that is the compactification of $\mathcal{A}_2(2)$ given by fourth powers of theta constants.

\begin{proof}\textit{(of Proposition \ref{segre})}

By the general description of the fibers over semistable non-stable bundles given in Prop. \ref{piani}, there are pairs of $\PP^2$s, spanned by complementary triples of points of $N\subset \PP^4_Q$, that are contracted to points of $Sing(\su)=\Kum(C)$. More precisely we have
$10=\left(\begin{array}{c}6\\ 3\\ \end{array}\right)/2$ pairs of $\PP^2$s that are contracted, each pair to a point of the intersection $S_N\cap \Kum(C)$, which is made up of ten points. Note that the intersection of $\su$ with the $\PP^4$ spanned by $\PP^3_B=\PP_c$ and the point in $|2D|$ corresponding to $N$ is the union $S_N \cup \PP^3_B \subset \PP^4$. Since $deg(\su)=4$ this implies that $deg(S_N)=3$. By results of Varchenko \cite{varchenko}, a cubic 3-fold cannot have more than ten isolated double points and the Segre cubic is the unique (up to isomorphism) cubic 3-fold with ten nodes (see also \cite{hu:gsq}, Sect. 3.2). Hence we conclude.
\end{proof}

\begin{remark}\label{negative}
Lemma \ref{baselocus} and Proposition \ref{rnc} imply that if $g>3$, we will not have a classifying map that contracts all RNCs, since some of them are contained in the base locus. Hence we cannot expect an isomorphism of the generic fiber with any compactification of $\cM_{0,2g}$.
\end{remark}


\smallskip





We remark that one can explicitly display set of \textit{boundary} divisors inside $S_N$ over which the fibers of $\varphi_D$ degenerate to lower degree rational normal curves. This degeneration of the fibers gives $\varphi_D$ a quite deep modular description in terms of $\cM_{0,6}$, that has been explored and generalized to other higher dimensional moduli spaces in \cite{bolgit}.


\medskip

Let us denote by $Bl_{\PP_c}(\su) \subset Bl_{\PP_c}|2\Theta|$ the blow-up of the Coble quartic along $\PP_c$, naturally contained in the blow-up of $|2\Theta|$ along the same subvariety. Since $\PP_c$ is a hyperplane of $<\PP_c,N>=\PP^4$ for every $N\in |2D|$, the blow up of $<\PP_c,N>$ is isomorphic to $\PP^4$ itself. Hence we have the following corollary.

\begin{corollary}
The blow up $Bl_{\PP_c}|2\Theta|$ is a rank 4 projective bundle over $|2D|$ that contains $Bl_{\PP_c}(\su)$. The intersection of $Bl_{\PP_c}(\su)$ with the general fiber of the projective bundle is a Segre cubic.
\end{corollary}

\section{The fibration in $\cM_{0,2g}$ for $g(C)\geq 4$.}

\subsection{The moduli spaces $\cM_{0,n}$}

Before going through the main results of this section we give a brief account of a few results on the moduli spaces of pointed curves that will be needed in the following. If $n\geq 3$, $\cM_{0,n}$ is set-theoretically the set of projective equivalence classes of ordered $n$-tuples of distinct points on $\PP^1$. Moreover, for the same range of $n$, it carries a structure of quasi-projective algebraic variety. There exist different compactifications of these spaces. The oldest one is probably the GIT compactification $\cM_{0,n}^{GIT}$ (the Segre cubic is $\cM_{0,6}^{GIT}$), that has known further recent interest thanks to \cite{vakilpointed}, where the equations for $\cM_{0,n}^{GIT}$ are computed. Subsequently the Mumford-Knudsen compactification  $\overline{\cM}_{0,n}$ was introduced, which is obtained by adding stable nodal marked curves \cite{knudsenpointed} and is finer than the GIT one on the boundary. By this we mean that there exists a birational morphism $\overline{\cM}_{0,n} \ra \cM_{0,n}^{GIT}$ which contracts partially some boundary strata, but it is an isomorphism over the open subset $\cM_{0,n} \subset \overline{\cM}_{0,n}$. See \cite{bolgit} or \cite{langeavritz} for more details on this morphism.


\medskip

A large amount of results on $\overline{\cM}_{0,n}$ can be obtained by studying the geometry of RNCs in $\PP^{n-2}$ and the birational transformations of $\PP^{n-2}$: the first example of this interplay was Theorem \ref{verokapra}. The Mumford-Knudsen space $\overline{\cM}_{0,n+1}$ has also different realizations as a blow-up of the projective space. We are particularly interested in the following.

\begin{theorem}\label{bloua}(\cite{kapra},\cite{hassetpointed} Sect. 6.2)

The Mumford-Knudsen compacification $\overline{\cM}_{0,n+1}$ has the following realization as a sequence of blow-ups of $\PP^{n-2}$. Let $q_1,\dots,q_{n}$ be general points in $\PP^{n-2}$:\\

\smallskip

\noindent 1: blow up the points $q_1,\dots, q_{n}$;\\
2: blow up proper transforms of lines spanned by pairs of the points
$q_1,\dots, q_{n}$;\\
3: blow up proper transforms of 2-planes spanned by triples of the point; . . .\\
...\\
n-3: blow up proper transforms of (n-4)-planes spanned by (n-3)-tuples
of the points.
\end{theorem}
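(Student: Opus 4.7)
The plan is to invoke Kapranov's construction, realizing $\overline{\cM}_{0,n+1}$ birationally via the Hilbert scheme of rational normal curves (RNCs) through $n+1$ general points in $\PP^{n-2}$, in the spirit of Theorem \ref{verokapra}. Fix $n$ points $q_1,\dots,q_n\in\PP^{n-2}$ in general linear position. One defines a birational morphism $\psi:\overline{\cM}_{0,n+1}\lra\PP^{n-2}$ by embedding a generic $(C,p_1,\dots,p_{n+1})$ via the complete linear series $|\cO_{\PP^1}(n-2)|$, normalizing by the unique $\PGL_{n-1}$-element carrying the images of $p_1,\dots,p_n$ to $q_1,\dots,q_n$, and letting $\psi$ record the image of $p_{n+1}$. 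Conversely, for $q\in\PP^{n-2}$ in general position a dimension count shows that there is a unique RNC through $q_1,\dots,q_n,q$, and parametrizing it yields the inverse, so $\psi$ is birational.

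Next I would locate the indeterminacy locus of $\psi^{-1}$: this is exactly where the RNC through $q_1,\dots,q_n,q$ degenerates or loses uniqueness, which happens precisely when $q$ lies on the linear span of some proper subset $\{q_{i_1},\dots,q_{i_k}\}$ with $k\leq n-2$, i.e.\ on the union of the $(k-1)$-planes spanned by $k$-tuples of the $q_i$, for $k=1,\dots,n-3$. On the moduli side, each boundary divisor $D_{I\cup\{n+1\}}\subset\overline{\cM}_{0,n+1}$ (where $p_{n+1}$ collides with the $p_i$ for $i\in I$) is contracted by $\psi$ onto the linear span of $\{q_i \mid i\in I\}$, with $|I|=k$ controlling the dimension of the image. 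This already suggests the $n-3$ stages of blowups stated in the theorem: at stage $k$ one extracts exactly the divisors $D_{I\cup\{n+1\}}$ with $|I|=k$, while the divisors $D_J$ with $n+1\notin J$ map birationally onto their images and require no blowup.

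I would then proceed by induction on $k$, showing that after the first $k-1$ stages the proper transform of the $(k-1)$-plane spanned by any $k$-tuple of $q_i$ is smooth of the expected dimension (this uses the general position of the $q_i$ to ensure the loci blown up at each stage meet transversally), so that its blowup introduces a new divisor whose normal bundle matches that of $D_{I\cup\{n+1\}}$ in $\overline{\cM}_{0,n+1}$. After all $n-3$ stages one obtains a smooth birational model with the same boundary stratification as $\overline{\cM}_{0,n+1}$. The main obstacle is to upgrade this birational matching into an actual isomorphism: this requires constructing on the iterated blowup an explicit family of stable $(n+1)$-pointed genus zero curves and invoking the universal property of the Mumford--Knudsen moduli space. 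The inductive structure is helpful here, since the exceptional divisor over a single $q_i$ must be identified with $\overline{\cM}_{0,n}$, which is itself the iterated blowup of $\PP^{n-3}$ by the inductive hypothesis, and these local isomorphisms glue to yield the global isomorphism.
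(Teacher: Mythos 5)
The paper offers no proof of this statement: it is imported as a citation (Hassett, Sect.~6.2; the realization is originally Kapranov's), so there is no internal argument to compare yours against. Your strategy is Kapranov's original one, via the birational map $\psi\colon\overline{\cM}_{0,n+1}\dashrightarrow\PP^{n-2}$ defined by rational normal curves through $n$ fixed general points; the cited source proves the result by a genuinely different route, interpreting $\PP^{n-2}$ and each intermediate blow-up as moduli spaces of \emph{weighted} pointed stable curves and the blow-ups as reduction morphisms associated to increasing the weights. Both routes can be made rigorous; yours is the more classical-geometric one, Hassett's gives every intermediate stage a modular meaning.

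As written, however, your sketch has a genuine gap at exactly the point you yourself flag as ``the main obstacle.'' The preliminary steps are correct: the $\PGL_{n-1}$-normalization is unique because $n$ general points of $\PP^{n-2}$ are a projective frame ($(n-2)+2$ points), the RNC through $n+1=(n-2)+3$ general points is unique, and the centers to be blown up are the spans of $k$-tuples for $1\le k\le n-3$ only (your ``$k\le n-2$'' is a slip: the hyperplanes spanned by $(n-2)$-tuples are birational images of boundary divisors and are not blown up). But all this establishes only that the iterated blow-up and $\overline{\cM}_{0,n+1}$ are birational, with corresponding boundary strata; birationality plus a matching of stratifications and of normal bundles does not force two smooth projective models to be isomorphic --- nothing in your induction excludes, for instance, a small modification separating them. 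The substance of the theorem is precisely the step you defer: one must exhibit over the iterated blow-up a flat family of stable $(n+1)$-pointed genus-zero curves extending the family of marked RNCs (the limits being unions of degenerate RNCs in complementary spans, with the markings carried along as proper transforms of the constant and tautological sections), then use the universal property of the fine moduli space $\overline{\cM}_{0,n+1}$ to obtain a morphism from the blow-up, and finally prove that this birational morphism is an isomorphism (e.g.\ by checking it contracts no curve and applying Zariski's main theorem, or by producing the inverse morphism). Your proposed substitute --- gluing ``local isomorphisms'' over the exceptional divisors supplied by the inductive hypothesis --- does not by itself define a morphism of the total spaces, so it cannot close this gap as stated.
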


The idea of Theorem \ref{bloua} is that one can associate to a general point $q\in\PP^{n-2}$ the unique rational curve passing through $q,q_1,\dots,q_n$. The points $q_i$ determine the first $n$ markings and $q$ the $n+1^{th}$. This gives in fact an element of $\overline{\cM}_{0,n+1}$.  The blow-down map $b:\overline{\cM}_{0,n+1}\ra \PP^{n-2}$ associated to the construction of Theorem \ref{bloua} was first described, via a different sequence of blow-ups, by Kapranov \cite{kapra}. The map $b$ shows quite explicitly the relation of $\overline{\cM}_{0,n+1}$ with RNCs in $V_0(q_1,\dots,q_n)$. Let in fact $\pi:\overline{\cM}_{0,n+1}\ra \overline{\cM}_{0,n}$ be the forgetful morphism that drops the $(n+1)^{th}$ point and note that the fibers of $\pi$ are the universal curve over $\overline{\cM}_{0,n}$. Now, by \cite{keelkernan} Prop. 3.1., the images via $b$ of the fibers over points of ${\cM}_{0,n}\subset \overline{\cM}_{0,n}$ are the rational normal curves in $\PP^{n-2}$ passing through the $n$ general points $q_i$.

\subsection{Birational geometry of the fibers of $p_{\PP_c}$.}

Let $N= p_1+\dots + p_{2g}\in |2D|$ be a reduced divisor. By lemma \ref{baselocus}, if $g(C)\geq 4$ then the configuration of linear spaces $\Sec^{g-1}(N)$ is strictly contained in the base locus $\Sec^N$ of the restricted map $\varphi_{D|\PP^{2g-2}_N}$. The further base locus is a family of rational normal curves in $V_0(p_1,\dots,p_{2g})$. We claim that the generic RNC in $V_0(p_1,\dots,p_{2g})$ is not contained in $\Sec^N$.
We remark in fact that $\Sec^N$ has codimension at least two in $\PP^{2g-2}_N$ since $\dim(\Sec^{g-1}(C))=2g-3$ and $\Sec^{g-1}(C)$ cannot be contained in $\PP^{2g-2}_N$. This in particular implies that the generic RNC in $V_0(p_1,\dots,p_{2g})$ is not contained in the base locus.

\medskip

\begin{theorem}\label{functor}

Let $g\geq 4$. Then there exists a family of 2g-pointed rational curves over an open set of the fiber $p_{\PP_c}^{-1}(N)$
that induces a birational map to the moduli space $\cM_{0,2g}$.
\end{theorem}

\begin{proof}
Let us denote by $S_N$ the fiber $p_{\PP_c}^{-1}(N)$. The dimension of $S_N$ is $2g-3$, which is the same as the dimension of $\overline{\cM}_{0,2g}$. 
Recall that the general RNC, not contained in the base locus of $\varphi_{D|\PP^{2g-2}_N}$ and belonging to $V_0(p_1,\dots,p_{2g})$ is contracted by $\varphi_D$ to the S-equivalence class of a stable bundle in $p_{\PP_c}^{-1}(N)$. By Proposition \ref{rnc}, these stable bundles are an open subset $U$ contained in the intersection of $<N,\PP_c>$ with the stable part of $\su$. For the precise definition of $U$ see Rem. \ref{quantogeneral}, setting $N=\Delta(E)$. Now let us blow up $\PP^{2g-2}_N$ recursively, as described in Theorem \ref{bloua}, until we obtain $\overline{\cM}_{0,2g+1}$. Remember that this is accomplished by blowing up recursively the proper transforms of linear spans of points of $N$ up to the codimension two ones. 
Let us denote by $\sigma: \overline{\cM}_{0,2g+1} \ra \PP^{2g-2}_N$ the blow down map and call $\tilde{\varphi}_D:\overline{\cM}_{0,2g+1}\dashrightarrow S_N$ the map $\varphi_D \circ \sigma$.  This is still just a rational map because the part of the base locus which is ruled by RNCs has not been resolved (see the proof of Prop. \ref{rnc}), but we have managed to separate all the RNCs in $V_0(p_1,\dots,p_n)$. In fact by \cite{keelkernan} Prop. 3.1, the RNCs in $V_0(p_1,\dots,p_n)$ are pulled-back via $\sigma$ to the fibers of the universal curve over $\overline{\cM}_{0,2g}$ inside $\overline{\cM}_{0,2g+1}$. Now the fibers over vector bundles in $U$ are then in the regular locus of $\tilde{\varphi}_D$ since they do not intersect the base locus, which is a subfamily of the universal curve with base of smaller dimension. Hence they give us a sub-family, defined over $U$, of dimension $2g-3$ of the universal curve over $\overline{\cM}_{0,2g}$.
This in turn, by the universal property of the moduli space, induces an embedding $\nu$ of the open set $U\subset p_{\PP_c}^{-1}(N)$ in $\overline{\cM}_{0,2g}$ thus yielding a birational map between $S_N$ and $\cM_{0,2g}$. The situation is then summarized in the following commutative diagram.

\medskip

\ \ \ \ \ \ \ \  \ \ \ \ \ \ \ \ \ \ \ \ \ \ \ \ \ \ \ \ \ \ \xymatrix{ \overline{\cM}_{0,2g+1}\ar[d]^{\sigma} \ar@{-->}[dr]^{\tilde{\varphi}_D} \ar[r]^{\pi}  & \overline{\cM}_{0,2g}\\
\PP^{2g-2}_N \ar@{-->}[r]^{\varphi_D} & \ \ \ U \ \ \subset S_N \ar@{^{(}->}[u]_{\nu}}

\end{proof}

Now, the main Theorem (Theorem \ref{global}) of this paper is a combination of Remark \ref{zeito} (see also \cite{bol:kumwed}), Proposition \ref{segre} and Theorem \ref{functor}. A naif picture of the situation is given in Figure \ref{naive}.

\begin{figure}[!h]

\centering

\includegraphics[scale=0.45]{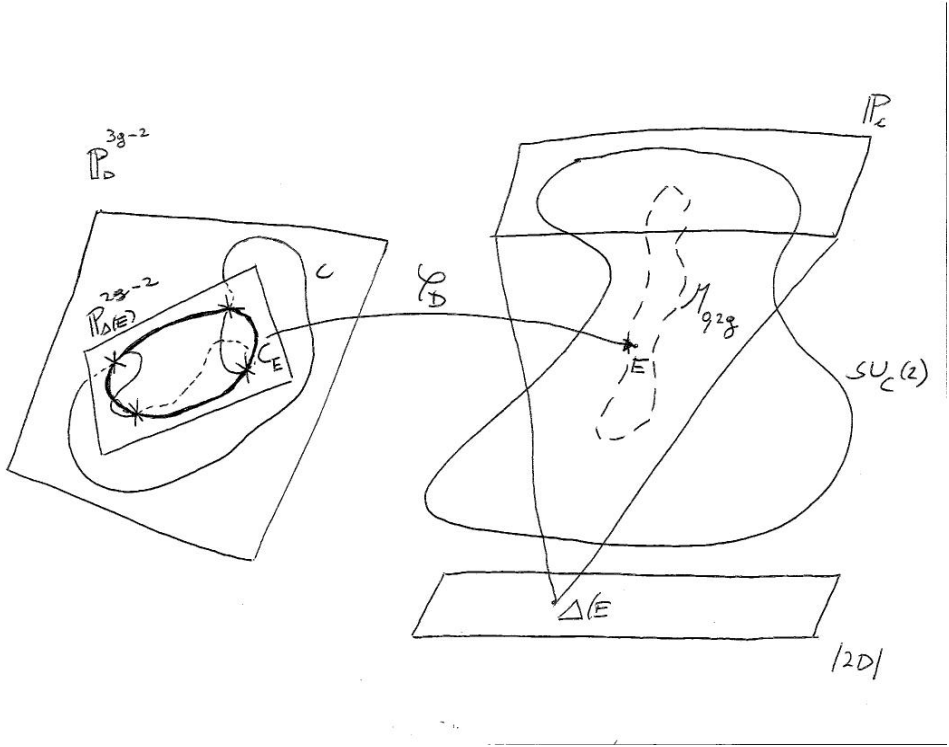}

\caption{A naif picture.\label{naive}}

\end{figure}

\medskip

It is worth reminding that there exists an explicit birational inverse map $\nu^{-1}:\overline{\cM}_{0,2g} \dashrightarrow p_{\PP_c}^{-1}(N)$ of $\nu$, where $N$ as usual is a reduced divisor $p_1+\cdots+p_{2g}\in |2D|$. We briefly describe it, being inspired by the map displayed in the proof of Thm 6.2 of \cite{bolobrivio}: we will denote by $\tilde{U}$ the image of $U$ inside $\overline{\cM}_{0,2g}$ via $\nu$. The idea is to build a semi-stable vector bundle with trivial determinant starting from a point configuration contained in $\tilde{U}$. Let us consider a vector space $V$ of dim 2, and set $\PP^1:=\PP(V^*)$. Let $(q_1,\dots,q_{2g})\in (\PP^1)^{2g}$ be an ordered $(2g)$-uple of distinct points represented by a moduli point in $\tilde{U}\subset\overline{\cM}_{0,2g}$. Let us choose lifts $(v_1,\dots,v_{2g})\in (V^*)^{2g}$ of the $q_i$s. More precisely, we associate to each $q_i$ an element $v_i$ of $Hom(V\otimes \OO, \mathcal{O}_{p_i})\cong V^*$. This in turn implies that we can produce a surjective morphism of sheaves $\kappa_{(q_1,\dots,q_{2g})}: V \otimes\OO \ra \mathcal{O}_N$ for each $(2g)$-uple of distinct points on $\PP^1$, by defining it to be zero out of $N$ and defined by $v_i$ on the fiber over $p_i$, for all $i$. The morphism depends on the choice of the lifts $v_i$ but it is not hard to see that the kernel

\begin{equation}\label{kern}
0\ra Ker(\kappa_{(q_1,\dots,q_{2g})}) \ra V \otimes \OO \ra \mathcal{O}_N \ra 0
\end{equation}

does only depend on the configuration of points $q_i\in \PP^1$. Moreover, the kernel is defined up to the choice of a basis of $V$, hence projectively equivalent ordered  point sets give rise to isomorphic kernels. This implies that we have a flat family of rank two bundles over $\tilde{U}$. Remark moreover that $\det(Ker(\kappa_{(q_1,\dots,q_{2g})}))=\OO(-2D)$. Then, results from Section 5 and 6 of \cite{bolobrivio} (in particular Thm. 6.2) imply the following Proposition.

\begin{proposition}
Let $(q_1,\dots,q_{2g})$ be any point configuration inside $\tilde{U}\subset 
\overline{\cM}_{0,2g}$. Then the map

\begin{eqnarray*}
\nu^{-1}:\tilde{U} & \ra & p_{\PP_c}^{-1}(N), \\
(q_1,\dots,q_{2g}) & \mapsto & Ker(\kappa_{q_1,\dots,q_{2g}})\otimes \OO(D);
\end{eqnarray*}

is a birational inverse of $\nu$.
\end{proposition}

In fact, under some assumptions one can extend $\nu^{-1}$ to some GIT semi-stable configurations of $2g$ points. Recall that a configuration of $2g$ points in $\PP^1$ is GIT semi-stable (resp. stable) when no more than $g$ points coincide (resp. no more than $(g-1)$).

\begin{proposition}\label{stablebun}
Let $(q_1,\dots,q_{2g})$ a GIT semi-stable configuration of points on $\PP^1$ and let us fix $N\in |2D|$ a reduced divisor. Then $K:=Ker(\kappa_{(q_1,\dots,q_{2g})})$ is stable if $h^0(C,K^*)<3$ and semi-stable if $h^0(C,K^*)<4$.
\end{proposition}

\begin{proof}
We will show the statement by proving semi-stability for the dual bundle $K^*$. By dualizing the exact sequence (\ref{kern}), we obtain:

$$0 \ra V^*\otimes \OO \ra K^* \ra \mathcal{O}_N \ra 0.$$

Passing to cohomology we see that $V^*$ injects in $H^0(C,K^*)$ as a 2-dimension subspace. Recall moreover that $\mu(K^*)=g$. Now, suppose that there exists a destabilizing line sub-bundle $R\subset K^*$ of degree $(g+1)$. By Riemann-Roch $h^0(C,R)\geq 2$. Hence if $h^0(C,K^*)<4$ there exists at least one common section
$s\in V^*\cap H^0(C,R) \subset H^0(C,K^*)$. Hence we get a commutative diagram

$$\xymatrix{   0  \ar[r] &  \OO  \ar@{^{(}->}[d]   \ar[r]^{\cdot s}  & R \ar@{^{(}->}[d]    \ar[r] & \mathcal{O}_F   \ar[d] \ar[r] & 0  \\
 0  \ar[r] & V^* \otimes \OO   \ar[r] & K^*  \ar[r]   &   \mathcal{O}_N \ar[r] & 0, \\
 }$$ 

where $F \subset N$ is a degree $(g+1)$ reduced divisor, and $\cdot s$ is the evaluation map of the section $s$. We observe that the bottom left map is just the evaluation of the sections in $V^*$. Furthermore, the lifts to $V^*$ of the points $(q_1,\dots,q_{2g})$ correspond to the locus inside $V^*\otimes \OO$ where the evaluation map $V^*\otimes \OO \ra K^*$ degenerates. Then one sees that this construction contradicts the hypotheses of semi-stability of $(q_1,\dots,q_{2g})$, since the degeneration locus of $\cdot s$ produces a set of $(g+1)$ coinciding points in $\PP^1=\PP(V^*)$ via the injection $\OO\hookrightarrow V^*\otimes \OO$. It is straightforward to check that this construction does not depend on the choice of the lifts of $(q_1,\dots,q_{2g})$. The arguments concerning stability or higher degree destabilizing bundles go along the same lines.
\end{proof}

\begin{remark}
We remark that, by twisting appropriately, this implies the semi-stability of $K\otimes \OO(D)$ hence one can define $\mu^{-1}$ over all the semi-stable configuration of points under the hypotheses of Prop. \ref{stablebun}.
\end{remark}

\begin{remark}
In \cite[Example 6.1]{bolobrivio} the authors display an example of a non stable rank 2 vector bundle, constructed via the same exact sequence (\ref{kern}) starting from a stable configuration of points. It is worth remarking that in fact in that example the vector bundle turns out to have \textit{four} global sections.
\end{remark}

\bibliographystyle{abbrv}
\bibliography{bibkaji}

\bigskip

Alberto Alzati\\
Dipartimento di Matematica "F.Enriques"\\
Via Saldini\\
20100 Milano\\
Italy.\\
alberto.alzati@unimi.it\\

\medskip

Michele Bolognesi\\
IRMAR\\
Universit\'{e} de Rennes1\\
263 Av. du G\'{e}n\'{e}ral Leclerc\\
35042  RENNES C\'{e}dex\\
FRANCE.\\
michele.bolognesi@univ-rennes1.fr

\end{document}